\title{ALGEBRAIC ADDITION THEOREMS}
\author{MARK B. VILLARINO \\[12pt]
        Escuela de matematica Universidad de Costa Rica}
\date{\today}
\newtheorem{theorem}{THEOREM}
\newtheorem{lemma}[theorem]{LEMMA}
\theoremstyle{definition}
\newtheorem{definition}{DEFINITION}
\newtheorem{coro}{COROLLARY}
 \newtheorem{rem}{Remark}
\newcommand{\shout}[1]{\emph{\textbf{#1}}} 
\numberwithin{equation}{subsubsection}
\numberwithin{equation}{subsection}
\renewcommand{\theenumi}{\roman{enumi}} 
\renewcommand{\p@enumii}{\theenumi}   
\newcommand{\C}{\mathbb{C}}           
\newcommand{\AAT}{{\small AAT}}       
\renewcommand{\:}{\colon}             
\begin{document}
\maketitle
\tableofcontents


\section{Introduction}

\indent

\textsc{Karl Weierstrass} was not only one of the greatest mathematicians of all time, but he was also one of the greatest teachers.  His lecture courses at the University of Berlin, beginning in 1855 and continuing every semester with almost no interruptions until his retirement in 1885 were justly famous as models of mathematical exposition.  

\begin{quote}  ``To perfection of form \textsc{Weierstrass} added that intangible something which is called inspiration . . . [and] he made creative mathematicians out of a disproportionately large fraction of his students."~\cite{Bell}\end{quote}

He customarily presented a four-semester cycle: complex function theory (1 semester), theory of elliptic functions (1 or 2 semesters), and the theory of algebraic integrals, functions, and the theory of the \textsc{Jacobi} inversion problem (1 or 2 semesters).  He would use different approaches at different times, reworking a subject over and over until finding the ``most natural" development of the theory.  This made for brilliant advanced mathematics courses and slow publication.

\textsc{Weierstrass}' theory of elliptic functions is a perfect example of this.  He proceeds as follows.  After discussing the exponential and trigonometric functions, say $\varphi(u)$, he concludes that they share the following properties:
\begin{enumerate}
  \item there exists an \emph{\textbf{algebraic equation}} between $\varphi(u)$, $\varphi(v)$, and $\varphi(u+v)$, whose coefficients do not depend on $u$ and $v$.
  \item the function $\varphi(u)$ is \emph{\textbf{meromorphic}} in all of finite space.
   \item there exists an \emph{\textbf{algebraic equation}} between the function $\varphi(u)$ and its \emph{\textbf{first derivative}}, $\varphi'(u)$, whose coefficients do not depend on $u$.
   \item the function $\varphi(u)$ is \emph{\textbf{periodic}}.
\end{enumerate}

 \textsc{Weierstrass} adds that functions such as $\sqrt[n]{\varphi(u)}$, where $n$ is an integer $>0$, have the properties (i), (iii), and iv), while in (ii), one must write \emph{\textbf{algegbroid}} instead of \emph{\textbf{meromorphic}}.

Then \textsc{Weierstrass} separates out property (i) as the \emph{characteristic} property of the functions $\varphi(u)$ and formalizes it in the following definition.

\definition If an analytic function $\varphi(u)$ has the characteristic property \begin{equation}
\label{AAT}
\boxed{G[\varphi(u),\varphi(v),\varphi(u+v)]=0}
\end{equation}where the $G(U,V,W)$ is a polynomial in $U$, $V$, and $W$ with coefficients that do not depend on $u$ and $v$, we say that $\varphi(u)$ \emph{admits an \textbf{ALGEBRAIC ADDITION THEOREM (AAT)}}.
\\

This leads \textsc{Weierstrass} to pose the following general problem as the starting point of the entire theory:

\begin{quote}

\emph{\textbf{It is required to find all analytic functions which admit an ALGEBRAIC ADDITION THEOREM.}}
 
 \end{quote}

 The \emph{answer} to \textsc{Weierstrass}' fundamental question is the content of the following two theorems:

\begin{theorem}\label{1} \begin{quote}.

 A MEROMORPHIC function admits an algebraic addition theorem if and only if it is either\begin{enumerate}
  \item a RATIONAL function; or
  \item a PERIODIC function.
  \end{enumerate}\end{quote}\end{theorem}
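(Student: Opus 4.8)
\medskip
\noindent\textbf{Proof strategy.}\quad
It is cleanest to aim for the sharp form: \emph{a nonconstant meromorphic $\varphi$ admits an AAT precisely when $\varphi$ is a rational function of $u$, a rational function of $e^{\lambda u}$ for some constant $\lambda$, or an elliptic function}. Theorem~\ref{1} follows at once, since the last two families are periodic. For the ``if'' direction it is enough, in each case, to show that $\varphi(u+v)$ is an algebraic function of $\varphi(u)$ and $\varphi(v)$ and then clear denominators. If $\varphi$ is rational, $\C(u,v)$ is a finite algebraic extension of $\C(\varphi(u),\varphi(v))$, so $\varphi(u+v)$ is algebraic over it. If $\varphi(u)=R(e^{\lambda u})$, then $e^{\lambda u}$ is algebraic over $\C(\varphi(u))$ and likewise for $v$, while $\varphi(u+v)=R(e^{\lambda u}e^{\lambda v})\in\C(e^{\lambda u},e^{\lambda v})$. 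If $\varphi$ is elliptic, write $\varphi=R_1(\wp)+\wp' R_2(\wp)$ with $\wp$ the Weierstrass function of the period lattice; using that $(\wp')^2$ is a cubic in $\wp$ and that the classical addition formula gives $\wp(u+v)$ rationally in terms of $\wp(u),\wp'(u),\wp(v),\wp'(v)$, one obtains successively that $\wp(u),\wp'(u)$ are algebraic over $\C(\varphi(u))$, that $\wp(u+v),\wp'(u+v)$ are algebraic over $\C(\varphi(u),\varphi(v))$, and hence so is $\varphi(u+v)$.

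\medskip
\noindent For the ``only if'' direction, the decisive first step is to extract from the AAT a \emph{nontrivial first-order algebraic differential equation} for $\varphi$. Take $G$ irreducible; as $\varphi(u)$ and $\varphi(v)$ are algebraically independent over $\C$, the polynomial $G$ must genuinely involve its third argument. Differentiating $G(\varphi(u),\varphi(v),\varphi(u+v))=0$ separately in $u$ and $v$ yields $G_1\varphi'(u)+G_3\varphi'(u+v)=0$ and $G_2\varphi'(v)+G_3\varphi'(u+v)=0$, where $G_i$ denotes $\partial_i G$ evaluated at $(\varphi(u),\varphi(v),\varphi(u+v))$; subtraction eliminates $\varphi'(u+v)$ and leaves $G_1\varphi'(u)-G_2\varphi'(v)=0$, in which $\varphi(u+v)$ still appears. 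Eliminating $\varphi(u+v)$ between this relation and $G$ (via the resultant in that variable) produces a polynomial identity $\Theta(\varphi(u),\varphi'(u),\varphi(v),\varphi'(v))=0$; moreover $\Theta\not\equiv0$, for otherwise $G$ would divide $G_1 X'-G_2 Y'$ in the elimination variable and therefore divide both $\partial_1 G$ and $\partial_2 G$, forcing $\partial_1 G=\partial_2 G=0$ and making $\varphi$ constant. Were $\varphi$ and $\varphi'$ algebraically independent, the image of $v\mapsto(\varphi(v),\varphi'(v))$ would be Zariski dense in $\C^2$, so for a suitable regular value $v_0$ we would get $0\ne\Phi(X,X'):=\Theta(X,X',\varphi(v_0),\varphi'(v_0))$ with $\Phi(\varphi,\varphi')=0$, a contradiction. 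Hence $\varphi$ satisfies some nontrivial $\Phi(\varphi,\varphi')=0$.

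\medskip
\noindent What remains no longer uses the AAT: it is the classification of meromorphic solutions on $\C$ of a first-order algebraic ODE. With $\Phi$ irreducible, $\Phi(x,y)=0$ defines an irreducible affine curve; let $\overline{\Gamma}$ be its normalization, a compact Riemann surface of genus $g$, with the induced meromorphic functions $x,y$. By compactness of $\overline{\Gamma}$ the assignment $u\mapsto$ the point with coordinates $(\varphi(u),\varphi'(u))$ extends to a nonconstant holomorphic map $E\colon\C\to\overline{\Gamma}$, and $dx/du=\varphi'=y$ gives $E^{\ast}(dx/y)=du$, a nowhere-vanishing holomorphic $1$-form on $\C$. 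If $g\ge2$, the universal cover of $\overline{\Gamma}$ is the unit disc, so $E$ lifts to a bounded entire function and is constant --- impossible. If $g=1$, write $\overline{\Gamma}=\C/\Lambda$ and lift $E$ to an entire $\widetilde E$; then the pullback of $dx/y$ to $\C$ equals $f(z)\,dz$ with $f$ elliptic for $\Lambda$, and $E^{\ast}(dx/y)=du$ becomes $f(\widetilde E(u))\,\widetilde E'(u)\equiv1$, which forces $\widetilde E$ to omit every zero and every pole of $f$; a nonconstant elliptic function has both, i.e.\ at least two distinct omitted values, so Picard's theorem would make $\widetilde E$ constant. Hence $f$ is constant, $dx/y$ holomorphic, $\widetilde E$ affine, and $\varphi(u)=x(\alpha u+\beta)$ is elliptic. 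If $g=0$, then $\overline{\Gamma}=\mathbb{P}^1$ with a rational parameter $t$, $x$ is rational in $t$, and $u=\int dx/y=\int\rho(t)\,dt$ with $\rho$ rational; requiring the inverse $t=t(u)$ to be single-valued and meromorphic on all of $\C$ pins the partial-fraction shape of $\rho$ down so far --- the monodromy around its poles must be trivial in the right sense --- that $u$ is either a M\"obius function of $t$, whence $\varphi$ is rational in $u$, or of the form $c_0+r\log\frac{t-a_1}{t-a_2}$, whence $t$, and with it $\varphi$, is rational in $e^{u/r}$. This exhausts the cases.

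\medskip
\noindent The main obstacle is the passage from the AAT to the first-order ODE, and, inside the ODE classification, the genus-$0$ analysis: in the first, one must make sure that the two successive eliminations do not collapse (the check $\Theta\not\equiv0$ and the dichotomy it feeds); in the second, one must actually run through the admissible partial-fraction configurations of $\rho$ and discard all but the rational and exponential ones --- this is precisely where Weierstrass's theory of Abelian integrals, tightened later by Phragm\'en, is needed. By contrast the exclusion of $g\ge2$ and the disposal of $g=1$, once phrased through uniformization and Picard's theorem, are brief, and they are exactly what confine the answer to the three classical families.
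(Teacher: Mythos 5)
Your argument is correct in outline, but it follows a genuinely different route from the paper's. The paper proves the substantive direction with Weierstrass' ``tiny'' Picard theorem: if $u=\infty$ is not an essential singularity, $\varphi$ is rational; otherwise one chooses a value $C_2$ attained at $m+1$ points $a_0,\dots,a_m$ (where $m$ is the degree of $G$ in its third argument), notes that the $m+1$ numbers $\varphi(\upsilon+a_i)$ are all roots of the polynomial $G(W,C_1,C_2)$, and a pigeonhole-plus-identity-theorem argument upgrades an equality of two of them to the identity $\varphi(u+a_k)\equiv\varphi(u+a_l)$, i.e.\ to a period $a_k-a_l$. That proof is short, uses nothing beyond the tiny Picard theorem, and yields exactly the dichotomy ``rational or periodic.'' You instead extract from the AAT a first-order algebraic differential equation $\Phi(\varphi,\varphi')=0$ (your two differentiations, the resultant in $\varphi(u+v)$, and the check $\Theta\not\equiv 0$ via irreducibility of $G$ are all sound, as is the density argument giving algebraic dependence of $\varphi$ and $\varphi'$), and then you classify meromorphic solutions of such an equation on $\C$ by the genus of the associated curve, in the spirit of Briot--Bouquet. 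This buys more than the paper proves: the sharp classification (rational in $u$, rational in $e^{\lambda u}$, or elliptic), which is also what makes your converse direction true; note that the literal ``if'' direction of the theorem as stated (an arbitrary periodic meromorphic function admits an AAT) is false, so your strengthened statement is the correct reading, and your algebraic-dependence verifications of the AAT for the three families are fine.

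Two steps, however, are only sketched and would have to be written out for this to stand as a complete proof. First, the extension of $u\mapsto(\varphi(u),\varphi'(u))$ to a holomorphic map $E\colon\C\to\overline{\Gamma}$ needs an argument at the poles of $\varphi$, $\varphi'$ and at parameters mapping to singular points of the affine curve (finiteness of the cluster set over $x=\infty$, lifting through the normalization); this is standard but not automatic. Second, and more seriously, the genus-zero case --- the real content of the Briot--Bouquet classification --- is asserted rather than proved: ``pins the partial-fraction shape of $\rho$ down so far that\dots'' is precisely the step that needs a proof. Concretely, from $\rho(t(u))\,t'(u)=1$ you must show that single-valuedness and meromorphy of $t(u)$ on all of $\C$ force $1/\rho$ to be a polynomial of degree at most $2$ in $t$ (equivalently, that $u=\int\rho\,dt$ is either a M\"obius function of $t$ or a constant plus $r\log$ of a M\"obius function, where in your logarithmic case $a_2=\infty$ must also be allowed); this requires a local analysis at the zeros and poles of $1/\rho$, or the monodromy argument you gesture at, and is the one genuine gap in the proposal. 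The $g\ge 2$ and $g=1$ cases are handled correctly.
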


\begin{theorem}\label{2} \begin{quote}.

An analytic  function  admits an algebraic addition theorem (AAT) if and only if it is the ROOT of an ALGEBRAIC EQUATION the coefficients of which are all RATIONAL functions of the same MEROMORPHIC function which also admits an algebraic additon theorem.
\end{quote}
 \end{theorem}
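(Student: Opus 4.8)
The plan is to establish the two implications of Theorem~\ref{2} separately; the ``if'' direction is a short transcendence-degree argument, while the ``only if'' direction contains all the real work. For the ``if'' direction, suppose $f$ is a root of $\sum_{k=0}^{n}R_k(g)\,f^k=0$ with the $R_k$ rational functions and $g$ meromorphic admitting an AAT $G_0\bigl(g(u),g(v),g(u+v)\bigr)=0$. Since $g$ is non-constant this relation genuinely involves its third slot, so $g(u+v)$ is algebraic over $L:=\C\bigl(g(u),g(v)\bigr)$, a field of transcendence degree $2$ over $\C$ (the functions $g(u),g(v)$ being algebraically independent, since a polynomial identity between them would force $g$ to assume only finitely many values). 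Each of $f(u),f(v),f(u+v)$ is then algebraic over $L$, so $\C\bigl(f(u),f(v),f(u+v)\bigr)$ has transcendence degree at most $2$ over $\C$; hence its three generators are algebraically dependent, and a nonzero polynomial relation between them is precisely an AAT for $f$.

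For the converse, assume $f$ is analytic and non-constant with $G\bigl(f(u),f(v),f(u+v)\bigr)=0$. The first step is to extract a first-order algebraic differential equation with constant coefficients. Differentiating the AAT with respect to $u$ and to $v$ and subtracting, the common term $G_3\,f'(u+v)$ cancels --- since $f(u+v)$ depends only on $u+v$ --- leaving $G_1\,f'(u)=G_2\,f'(v)$, where $G_i$ is the $i$-th partial of $G$ evaluated at $\bigl(f(u),f(v),f(u+v)\bigr)$. Eliminating $f(u+v)$ between this identity and the AAT by a resultant yields a polynomial identity $H\bigl(f(u),f'(u),f(v),f'(v)\bigr)\equiv 0$ with constant coefficients and variables occurring in two separated pairs; were $f$ and $f'$ algebraically independent over $\C$, the point $\bigl(f(u),f'(u),f(v),f'(v)\bigr)$ would be Zariski-dense in $\C^4$, forcing $H\equiv 0$. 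Disposing separately of the degenerate cases in which this elimination collapses (e.g.\ $G_3$ vanishing along the relation, or vanishing resultant --- which yield $f$ rational), one obtains $\Phi(f,f')=0$ for a nonzero polynomial $\Phi$. Consequently $z\mapsto\bigl(f(z),f'(z)\bigr)$ locally parametrizes the curve $C\colon\Phi(x,y)=0$ with $dz=dx/y$ along $C$; that is, $f$ is a local inverse of the abelian integral $w=\int dx/y$ on the smooth projective model of a component of $C$.

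The heart of the proof is to show that $C$ has genus at most $1$, and here the AAT must re-enter. Since $f(u+v)$ is algebraic over $\C\bigl(f(u),f(v)\bigr)$ and, by the differential equation, $f'(u+v)$ is algebraic over $\C\bigl(f(u+v)\bigr)$, the point of $C$ attached to $z_1+z_2$ depends algebraically on those attached to $z_1$ and to $z_2$: the rule $(z_1,z_2)\mapsto z_1+z_2$ descends to a birational group law on $C$. By Weil's theorem this birational law is that of a genuine one-dimensional algebraic group $\Gamma$ over $\C$, and $z\mapsto\bigl(f(z),f'(z)\bigr)$ becomes an analytic homomorphism $\C\to\Gamma(\C)$ with discrete kernel --- the periods of $f$. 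The one-dimensional algebraic groups over $\C$ are exactly $\mathbb{G}_a$, $\mathbb{G}_m$, and the elliptic curves, so $C$ is rational or elliptic; equivalently, and in Weierstrass's own spirit, the abelian integral $\int dx/y$ possesses an algebraic addition theorem, which by Abel's theorem and an analysis of its periods is impossible once the genus exceeds $1$. In the three surviving cases $f$ is algebraic, respectively, over $\C(z)$, over $\C(e^{az})$, or over $\C(\wp,\wp')$ and hence over $\C(\wp)$ (since $(\wp')^2\in\C(\wp)$); and in each case the base function --- $z$, $e^{az}$, or $\wp$ --- is meromorphic and admits an AAT, the first two trivially ($W=U+V$ and $W=UV$), the third because $\wp$ does. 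This is exactly the conclusion of Theorem~\ref{2}, and the trichotomy meshes with Theorem~\ref{1}: the three base functions are the prototypes of the rational, the singly periodic, and the doubly periodic meromorphic functions admitting an AAT.

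I expect the principal obstacle to be this last step: making rigorous the descent to a birational group law together with Weil's reconstruction of $\Gamma$, or, along the classical route, proving that an abelian integral on a curve of genus $\ge 2$ can carry no algebraic addition theorem --- the failure of Jacobi inversion for a single integral. A secondary technical point is to decide, within the genus-$0$ case, whether one lands in the additive or the multiplicative group, which comes down to reading off the residues of the differential $dx/y$ after it has been pulled back to $\mathbb{P}^1$.
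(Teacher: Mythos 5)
Your ``if'' direction (transcendence degree of $\mathbb{C}\bigl(f(u),f(v),f(u+v)\bigr)$ at most $2$) is sound, and your ``only if'' architecture --- differentiate the AAT to get $G_1f'(u)=G_2f'(v)$, eliminate $f(u+v)$, conclude an algebraic differential equation $\Phi(f,f')=0$ with constant coefficients, then classify $f$ through the curve $\Phi=0$ --- is a genuinely different route from the paper's, which never introduces $f'$ at all. There, one first shows via Weierstrass's continuation lemma (applied to the relation $f\left[P\!\left(\frac{u}{2}\right),P(u)\right]=0$ obtained by setting $u=v$ in the AAT) that $\varphi$ exists and is algebroid in every disk $|u|<R$; then (Koebe, or Phragm\'en) that the total number of branches is finite, being bounded by the degree of $G$ in its third argument through the permanence of the functional equation; and finally one takes a non-constant elementary symmetric function $\psi$ of these finitely many branches: it is single-valued, hence meromorphic in the whole finite plane, it inherits an AAT by elimination, and $\varphi$ is algebraic over it --- which is precisely the statement of Theorem~\ref{2}, with no classification of $\psi$ needed.

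The genuine gap in your proposal is at the step you yourself flag, and it is not a removable technicality but the heart of the matter. From ``$f(u+v)$ is algebraic over $\mathbb{C}\bigl(f(u),f(v)\bigr)$'' and ``$f'(u+v)$ is algebraic over $\mathbb{C}\bigl(f(u+v)\bigr)$'' you obtain only a finite algebraic \emph{correspondence} between pairs of points of $C$ and points of $C$, not a rational map; to ``descend to a birational group law'' you must know that the values at $u+v$ are \emph{rational} in those at $u$ and $v$ (a rational addition theorem), which is itself a nontrivial classical result and is nowhere established here. Without it Weil's group-chunk theorem cannot even be invoked, and the alternative you mention (that an abelian integral on a curve of genus $\geq 2$ admits no algebraic addition theorem) is likewise only named, not proved. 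Two further points are asserted without argument: that the degenerate elimination cases ``yield $f$ rational'' (a vanishing resultant should instead be excluded by an irreducibility/degree argument on $G$), and the passage from genus $\leq 1$ to ``$f$ is algebraic over $\mathbb{C}(z)$, $\mathbb{C}(e^{az})$ or $\mathbb{C}(\wp)$,'' which requires inverting the integral $\int dx/y$ and again uses the AAT. As it stands the proposal establishes the differential equation $\Phi(f,f')=0$ but not Theorem~\ref{2}; note also that, even if completed, your route proves the stronger classification by heavy algebraic-group (or Abel--Jacobi) machinery, whereas the paper reaches Theorem~\ref{2} directly by the elementary construction of $\psi$ as a symmetric function of the finitely many branches.
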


  The first published proof of Theorem \ref{1} and Theorem \ref{2} is by \textsc{Phragmen}~\cite{Phragmen} in 1884.  Later proofs were published by \textsc{Forsyth}~\cite{Forsyth} in 1893, \textsc{K\"obe}~\cite{Koebe} in 1905, 1914, \textsc{Falk}~\cite{Falk}, in 1910, and \textsc{Hancock}~\cite{Hancock}, in 1910.  These proofs are written in several languages, (English, French, German) and are, to say the least, not easily available.  The books by \textsc{Forsyth} and \textsc{Hancock} have been republished, but they contain many flaws and mistakes.  
  
  Our paper presents a self-contained elementary exposition of \textsc{Weierstrass} theory with complete proofs and once again makes this beautiful theory available to the modern reader.  The content of our paper is\begin{itemize}
  
    \item a proof of Theorem~\ref{1} 
  \item a proof of Theorem~ \ref{2}
  \item Some comments on the extension of the theory to several variables as well as some unsolved problems.
\end{itemize} 

 
 \section{Meromorphic Functions Having an AAT}
 
 \indent 
 
 The investigation of \emph{meromorphic} functions having an AAT was carried to completion by \textsc{Weierstrass} in his lectures.  It is to be regretted that these lectures have never been published.
 
 The proof is a beautiful illustration of his theory of analytic function.  It shows how the rigid algebraic structure of the AAT imposes regularity and order on the unbridled chaos of the distribution of values of a meromorphic in the neighborhood of an essential singularity.
 
 It offers a marvelously satisfying "explanation" of \emph{\textbf{periodicity}}.
 
 
 \subsection{\textsc{Weierstrass}' ``tiny" \textsc{Picard} theorem.}
 
 \indent
 
 The theorems of \textsc{Picard} on entire and meromorphic functions are justly famous.  Experts classify them according to size: the ``small" theorem which deals with entire functions, and the ``big" theorem which deals with meromorphic functions.  Their proofs are not simple since they involve either the theory of the elliptic modular function or the theory of normal families.
 
 It does not seem to be well-known that long before \textsc{Picard} published his theorem (1880), \textsc{Weierstrass} had already generalized his own justly famous theorem on the behavior of an analytic function in the neighborhood of an isolated essential singularity to a partial version of the later \textsc{Picard} theorems.  Since these latter are called ``small" and ``big", we have called the \textsc{Weierstrass} result \textsc{Weierstrass}' ``tiny" \textsc{Picard} theorem.   We follow \textsc{Osgood}'s treatment~\cite{Osgood}.
 
 \begin{theorem}[\textsc{Weierstrass}' ``tiny" \textsc{Picard} theorem]\label{tiny} \begin{quote}.
 
 Suppose that the holomorphic function  $\varphi(u)$ has an isolated essential singularity at the point $u=a$.  Moreover, suppose \begin{enumerate}
  \item $C$ is an arbitrary complex number;
  \item $|w- C|<h$ is an arbitrarily small neighborhood of the point $w=C$.
\end{enumerate}\
Then: within this neighborhood there exists a point $C'$ for which the equation\begin{equation}
\label{tinyeq}
\boxed{\varphi(u)=C'}
\end{equation}has infinitely many roots which cluster down on the point $a$ as a limit point. \end{quote}

\end{theorem}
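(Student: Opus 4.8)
The strategy is to argue by contradiction and to squeeze out of the hypothesis the statement that $\varphi$ \emph{omits an entire disk of values} on one fixed punctured neighbourhood of $a$ --- which is impossible by the Casorati--Weierstrass theorem (the assertion that near an isolated essential singularity $\varphi$ approximates every complex number arbitrarily well on every punctured neighbourhood of $a$), and which is essentially the only input particular to essential singularities. So suppose the conclusion fails for the given $C$ and $h$: for \emph{every} $C'$ with $|C'-C|<h$ the equation $\varphi(u)=C'$ has only finitely many solutions accumulating at $a$; equivalently, for each such $C'$ there is a radius $\delta_{C'}>0$ with $\varphi(u)\ne C'$ throughout the punctured disk $0<|u-a|<\delta_{C'}$.

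The naive move --- intersect all these punctured disks to conclude $\varphi$ avoids the disk $|w-C|<h$ near $a$ --- fails because $\delta_{C'}$ depends on $C'$, and removing that dependence is the heart of the matter. I would do it by a Baire category argument. Restrict attention to the closed disk $K=\{\,w:|w-C|\le h/2\,\}$, which is a complete metric space, and for each integer $n\ge 1$ set
\[
 F_n=\bigl\{\,w\in K:\varphi(u)\ne w\ \text{whenever}\ 0<|u-a|<\tfrac1n\,\bigr\},
\]
so that $F_n=K\setminus\varphi\!\left(\{\,u:0<|u-a|<\tfrac1n\,\}\right)$. By the reduction above every $w\in K$ lies in some $F_n$ (take $n>1/\delta_w$), hence $K=\bigcup_{n\ge1}F_n$. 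Moreover each $F_n$ is closed: the restriction of $\varphi$ to the punctured disk $0<|u-a|<\frac1n$ is holomorphic and non-constant --- constancy would make the singularity removable --- so its image is open by the open mapping theorem, and $F_n$ is $K$ minus that open set.

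By the Baire category theorem some $F_n$ has non-empty interior, hence contains a genuine open disk $D_2$. But then $\varphi$ takes \emph{no} value of $D_2$ on the punctured disk $0<|u-a|<\frac1n$, contradicting the Casorati--Weierstrass theorem. Hence the supposition is untenable, so there is a $C'$ with $|C'-C|\le h/2<h$ for which $\varphi(u)=C'$ has a solution in $0<|u-a|<\frac1n$ for every $n$. Picking one such solution $u_n$ for each $n$ produces infinitely many distinct points (if only finitely many distinct values occurred, one of them would lie in $0<|u-a|<\frac1n$ for arbitrarily large $n$, forcing it to equal $a$) with $u_n\to a$; thus $\varphi(u)=C'$ has infinitely many roots clustering at $a$, as claimed. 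I expect the one real obstacle to be precisely this upgrade from the pointwise radii $\delta_{C'}$ to a uniform punctured neighbourhood, and the combination of Baire's theorem with the open mapping theorem is the pivot; everything else is routine bookkeeping around Casorati--Weierstrass.
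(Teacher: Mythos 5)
Your proof is correct, but it takes a genuinely different route from the paper's (Osgood's rendering of Weierstrass' argument). The paper is direct and constructive: it applies Casorati--Weierstrass once to get $u_1$ with $C_1=\varphi(u_1)$ in $|w-C|<h$, perturbs if necessary so that $\varphi'(u_1)\neq 0$, so that $w=\varphi(u)$ maps a small disk about $u_1$ (kept away from $a$) one-to-one onto a neighbourhood $T_1\subset\{\,w:|w-C|<h\,\}$ of $C_1$; it then repeats the step with $C_1$ in place of $C$, choosing the new $u$-disk closer to $a$ and disjoint from the previous one, producing nested regions $T_1\supset T_2\supset\cdots$ and points $u_n\to a$; a common point $C'$ of the nested $T_n$ is then a value taken at every $u_n$, which gives the roots clustering at $a$. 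You instead argue by contradiction and convert the pointwise root-free radii $\delta_{C'}$ into a uniformly omitted open disk of values via Baire category: your sets $F_n$ are closed in $K$ because the open mapping theorem makes the image of the punctured disk open, they cover $K$ under the contradiction hypothesis, so some $F_n$ has interior, and an omitted disk of values on one fixed punctured neighbourhood contradicts Casorati--Weierstrass. Both proofs consume Casorati--Weierstrass as the sole input about essential singularities; the paper's approach buys an explicit construction of $C'$ and of the roots $u_n$ at the cost of the nested-neighbourhood bookkeeping and the derivative condition, while yours buys brevity and conceptual cleanness at the cost of non-constructiveness and of invoking Baire plus the open mapping theorem. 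Two routine points to fix in a final write-up: define $F_n$ only for $n$ large enough that $0<|u-a|<\tfrac1n$ lies inside the punctured neighbourhood where $\varphi$ is holomorphic (the covering of $K$ is unaffected), and justify that a nonempty relatively open subset of the closed disk $K$ contains a genuine Euclidean disk, which holds because every relative neighbourhood of a boundary point of $K$ meets the interior of $K$; with those remarks the argument is complete.
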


We remark that \textsc{Picard}'s ``big" theorem asserts if $C'$ be \emph{any} number chosen arbitrarily \emph {a priori}, with at most two excluded values, then  the equation\begin{equation}
\label{ }
\varphi(u)=C'
\end{equation}has infinitely many roots which accumulate down on $a$ as a limit point


\proof{(of the ``tiny" theorem)} By the classical \textsc{Weierstrass} theorem inside of a certain domain $|u-a|<\delta_1$ there is a point $u_1$ at which $\varphi(u)$ assumes a value $C_1$, which lies in the domain $|w-C|<h\equiv h_1$.  If  $\varphi'(u_1)\neq 0$, the equation\begin{equation}
\label{ }
w=\varphi(u)
\end{equation}defines a one-to-one mapping of the neighborhood of $u_1$ onto the neighborhood $T_1$ of $C_1$; we will take our first neighborhood to be a small circle, $|u-u_1|<\epsilon_1$, which, however, lies totally within the domain $|u-a|<\delta_1$, but does not extend out to the point $a$, itself, and for which, additionally, $T_1$ is totally within the domain $|w-C|<h_1$.  If it turns out that $\varphi'(u_1)= 0$, we can choose a second slightly altered point $u_1'$ for which $\varphi'(u_1')\neq 0$ while nevertheless $\varphi(u_1')$ lies within the domain $|w-C|<h_1$, and then we take this instead of the point $u_1$.

We now repeat this step, and take $C_1$ in place of $C$ and $h_2,\ \ \delta_2$ so small that the circle $|w-C_1|<h_2$ lies within $T_1$, while, on the other hand, no point of the circle $|u-a|<\delta_2$ has any points in common with the circle $|u-u_1|<\epsilon_1$.  This way we arrive at a new point, $u_2$ for which $C_2=\varphi(u_2)$ lies in the circle $|w-C_1|<h_2$ as well as for which $\varphi'(u_2)\neq 0$.  Therefore, a smaller circle $|u-u_2|<\epsilon_2$, which lies in the circle $|u-a|<\delta_2$ and which does not extend to $a$ will be mapped in a one-to-one manner onto a neighborhood $T_2$ of $C_2$; moreover we shall take $\epsilon_2$ so small that $T_2$ lies entirely inside of $T_1$.

By continually repeating this process we obtain an unbounded sequence of nested domains $T_1, T_2,\cdots$ which will be mapped in a one-to-one manner by means of the relation $$w=\varphi(u)$$onto the circles $|u-u_n|<\epsilon_n$, $n=1,2,\cdots$.  These (nested) domains have at least one point, $C'$, in common, and to this point corresponds an image point $u_n$ in each circle:\begin{equation*}
\label{ }
C'=\varphi(u_n)\ \ \ \ \ \ (n=1,2,\cdots)
\end{equation*}  The points $u_n$ form the sequence of roots  of  \eqref{tinyeq} which accumulate down on the essential singularity $u=a$. This completes the proof.\endproof

\begin{coro}\begin{quote}.

\emph{If $a=\infty$, the equation\begin{equation}
\label{tinyeq}
\boxed{\varphi(u)=C'}
\end{equation}has infinitely many roots which form a discrete set of isolated points in the plane. }\end{quote}\end{coro}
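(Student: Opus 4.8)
The plan is to deduce the corollary from Theorem~\ref{tiny} by transporting the essential singularity at $\infty$ to a finite one via the inversion $u=1/t$. Concretely, since $\varphi$ has an isolated essential singularity at $u=\infty$, it is holomorphic in some punctured neighbourhood $|u|>R$ of $\infty$; I would set $\psi(t):=\varphi(1/t)$, which is then holomorphic in the punctured disk $0<|t|<1/R$. By the very definition of the nature of the singularity at $\infty$ (read off from the Laurent expansion in $1/u$, i.e.\ in $t$), the point $t=0$ is an isolated essential singularity of $\psi$.

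Next I would apply Theorem~\ref{tiny} to $\psi$ at the point $t=0$. Given an arbitrary complex number $C$ and an arbitrarily small disk $|w-C|<h$, the theorem produces a point $C'$ in that disk for which the equation $\psi(t)=C'$ has infinitely many roots $t_1,t_2,\dots$ clustering down on $t=0$, that is, $t_n\to 0$.

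Transporting back through $u=1/t$, the points $u_n:=1/t_n$ are infinitely many distinct roots of $\varphi(u)=C'$, and $|u_n|=1/|t_n|\to\infty$. A sequence of complex numbers tending to $\infty$ has no accumulation point in the finite plane --- equivalently, every closed disk $|u|\le M$ contains only finitely many of the $u_n$ --- so $\{u_n\}$ is a discrete set of isolated points in $\C$, which is exactly the assertion.

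I do not expect a genuine obstacle here: the only point needing a word of justification is that inversion carries an isolated essential singularity at $\infty$ to one at $0$, and that is immediate from the way the singularity type at $\infty$ is defined. In effect the corollary is just Theorem~\ref{tiny} read on the Riemann sphere through the chart $t=1/u$.
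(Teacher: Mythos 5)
Your argument is correct, and it is exactly the reasoning the paper leaves implicit: the corollary is stated without proof as the case $a=\infty$ of Theorem~\ref{tiny}, read through the chart $t=1/u$ so that ``clustering at $\infty$'' becomes the absence of finite accumulation points, i.e.\ discreteness in $\C$. Your write-up merely makes that transfer explicit, so no further comment is needed.
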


\begin{coro}\begin{quote}.

\emph{The theorem continues to hold if $\varphi(u)$ is meromorphic in the neighborhood of the essential singular point $u=a$.}\end{quote}\end{coro}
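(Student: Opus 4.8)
The plan is to reduce the meromorphic case to the already-established holomorphic Theorem~\ref{tiny} by means of a linear–fractional substitution that removes the poles near $a$ without destroying the essential singularity. Fix the complex number $C$ and the radius $h>0$ of the prescribed neighbourhood $|w-C|<h$. First I would dispose of a trivial case: if for \emph{every} $\delta>0$ the equation $\varphi(u)=C$ has a solution in $0<|u-a|<\delta$, then, since the zero set of the meromorphic function $\varphi-C$ is discrete away from $a$, these solutions form an infinite sequence clustering down on $a$, and one simply takes $C'=C$. Hence I may assume there is a $\delta>0$ with $\varphi(u)\neq C$ throughout $0<|u-a|<\delta$.

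Under that assumption I would set $\psi(u):=1/\bigl(\varphi(u)-C\bigr)$. On $0<|u-a|<\delta$ the function $\varphi-C$ is meromorphic and zero-free, so $\psi$ is holomorphic there: its only distinguished points are zeros, sitting exactly at the poles of $\varphi$. Moreover $u=a$ is an isolated \emph{essential} singularity of $\psi$, for if $\psi$ had instead a removable singularity or a pole at $a$, then $\varphi=C+1/\psi$ would have a pole or a removable singularity at $a$, contradicting in either case the hypothesis on $\varphi$. Thus Theorem~\ref{tiny} is available for $\psi$.

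I would then apply Theorem~\ref{tiny} to $\psi$ not at $C$, but at a point of the $w$-plane far from the origin, say on the disc $|w-2/h|<1/h$, every point of which has modulus exceeding $1/h$. The theorem yields a value $w_0$ in that disc for which $\psi(u)=w_0$ has infinitely many roots $u_1,u_2,\dots$ clustering down on $a$. At each such root $\varphi(u_n)=C+1/w_0$, and $|1/w_0|<h$, so $C':=C+1/w_0$ lies in $|w-C|<h$ and is assumed by $\varphi$ at the points $u_1,u_2,\dots$ clustering on $a$ — which is precisely the assertion. The case $a=\infty$ is handled, as before, by the substitution $u\mapsto 1/u$, which transports it to an essential singularity at the origin.

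The only genuine obstacle is the one just met: in the meromorphic setting the poles of $\varphi$ may themselves accumulate at $a$, so one cannot always shrink to a punctured disc on which $\varphi$ is holomorphic and simply quote Theorem~\ref{tiny}. Passing to $\psi=1/(\varphi-C)$ converts exactly those offending poles into harmless zeros while leaving the essential singularity intact; the remaining bookkeeping — placing the target disc for $\psi$ out near $\infty$ so that $C+1/w_0$ falls inside the \emph{prescribed} disc about $C$, and checking that $\psi$ really does retain its essential singularity — is routine.
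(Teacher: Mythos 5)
Your argument is essentially the paper's own: the paper also first disposes of the case where $\varphi$ takes the value $C$ in every neighborhood of $a$, and otherwise passes to $\Phi(u)=1/(C-\varphi(u))$ (your $\psi$, up to sign), set equal to $0$ at the poles of $\varphi$, which has an essential singularity at $u=a$, and then invokes Theorem~\ref{tiny}. Your only addition is to spell out the bookkeeping the paper leaves as ``follows easily'' --- choosing the target disc for $\psi$ far from the origin so that $C'=C+1/w_0$ lands in the prescribed disc $|w-C|<h$ --- which is correct.
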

 
 \proof Indeed, let $C$ be an arbitrary number.   If $\varphi(u)$ takes the value $C$ in every prescribed neighborhood of the point $u=a$, then we are done.  Otherwise, the function \begin{equation}
\label{ }
\Phi(u)=\frac{1}{C-\varphi(u)}
\end{equation}where $\Phi(u)$ will be defined as $0$ at the poles of $\varphi(u)$ will have an essential singularity at the point $u=a$, just as assumed in the previous theorem, and the proof follows easily on the basis of the theorem.\endproof


\newpage
\subsection{Periodicity}

We now offer \textsc{Weierstrass}' beautiful proof of the following theorem, due to him:

\begin{theorem}

\begin{quote}.

A \textbf{meromorphic} function which admits an \textbf{ALGEBRAIC ADDITION THEOREM}:\begin{equation}
\label{AAT}
\boxed{G[\varphi(u+v),\varphi(u),\varphi(v)]=0}
\end{equation}is either:\begin{enumerate}
  \item a \textbf{rational} function of $u$; or
  \item a \textbf{periodic} function of $u$.
   \end{enumerate}\end{quote}\end{theorem}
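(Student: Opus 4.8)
The plan is to study the behavior of $\varphi$ near its ``bad'' points and to show that if $\varphi$ is not rational, then the AAT forces a period. We argue by contradiction: assume $\varphi$ is a non-rational meromorphic function on $\C$ admitting the AAT \eqref{AAT}. Since $\varphi$ is non-rational and meromorphic on the finite plane, $\infty$ is an essential singularity, so by \textsc{Weierstrass}' ``tiny'' \textsc{Picard} theorem (Theorem~\ref{tiny}) together with its two corollaries, for a generic value $c$ the level set $\varphi^{-1}(c)$ is an infinite discrete set of points, and moreover generic values are taken by $\varphi$ at points where $\varphi' \neq 0$ (so the roots are simple). The strategy is to pick two points $u_0$ and $u_0 + p$ at which $\varphi$ takes the same value, with $p \neq 0$, and to show that $p$ must in fact be a period of $\varphi$; the infinitude of the level set is what guarantees we can find such a configuration where the algebra cooperates.

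First I would exploit the AAT to bound the ``complexity'' of $\varphi$ in a precise sense. Write $G(W,U,V) = \sum_{j=0}^{n} g_j(U,V)\, W^{j}$, and let $n = \deg_W G$ be minimal. Then $\varphi(u+v)$ is one of at most $n$ algebraic branches over the field $\C(\varphi(u),\varphi(v))$. Fix a value $v = b$ at which everything is generic (so $\varphi(b) = \beta$ is a simple value and the discriminant of $G(\cdot,\varphi(u),\beta)$ in $W$ does not vanish identically in $u$): then $u \mapsto \varphi(u+b)$ satisfies a polynomial identity of degree $n$ over $\C(\varphi(u))$. The key consequence I want to extract is that for each fixed $b$, the translate $\varphi(u+b)$ takes any given value at most $n$ times ``locally per value of $\varphi(u)$'', and in particular the map $\varphi$ cannot have too rich a branching structure; concretely, if $\varphi(u_1) = \varphi(u_2)$ for two generic points, then the finite set of branches of $G(\cdot,\varphi(u_1),\beta) = G(\cdot,\varphi(u_2),\beta)$ forces $\varphi(u_1+b)$ and $\varphi(u_2+b)$ to lie in a common finite set. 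Iterating the translation and using that a meromorphic function taking finitely many values on an infinite discrete set of the form $\{u_1 + kb\}$ must actually repeat, one produces a genuine coincidence $\varphi(u^{*} + p) = \varphi(u^{*})$ that propagates: $\varphi(u + p) = \varphi(u)$ identically.

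To make the last step rigorous I would use the following mechanism. Having found $p \neq 0$ and some $u^{*}$ with $\varphi(u^{*}+p) = \varphi(u^{*})$, consider the two meromorphic functions $\psi_1(u) := \varphi(u+p)$ and $\psi_2(u) := \varphi(u)$. Both satisfy the same AAT with the same $G$, so for fixed generic $v = b$ they are roots of the same degree-$n$ polynomial $P_b(W,u) := G(W,\varphi(u),\beta)$ over $\C(\varphi(u))$; the roots of $P_b$ are finitely many algebraic functions $W_1(u),\dots,W_n(u)$, each meromorphic on a suitable covering, and $\psi_1,\psi_2$ each coincide locally with one of them. Agreement of $\psi_1$ and $\psi_2$ at one point $u^{*}$ where that point is a regular point of the relevant branch forces $\psi_1 \equiv \psi_2$ on a neighborhood, hence on all of $\C$ by analytic continuation (taking care with the poles, which are removable coincidences). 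Thus $p$ is a period. Replacing the contradiction hypothesis: the only way to avoid producing such a $p$ is that the level sets are \emph{not} infinite, i.e. $\infty$ is not an essential singularity, i.e. $\varphi$ is rational. This gives the dichotomy.

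The main obstacle I anticipate is the bookkeeping in the middle step: turning ``$\varphi(u_1) = \varphi(u_2)$ implies $\varphi(u_1+b), \varphi(u_2+b)$ lie in a common finite set'' into an actual coincidence. One has to handle (a) the branch points and the discriminant locus of $G$ in $W$, where the count of branches drops and where the naive argument fails, and (b) the possibility that as one translates by $b$ repeatedly the points $u_1 + kb$ eventually hit a pole or a critical value of $\varphi$. The remedy is genericity: the infinitude supplied by the ``tiny'' \textsc{Picard} theorem gives a large reservoir of starting points, and the bad loci (poles, zeros of $\varphi'$, discriminant locus) are discrete, so one can always choose $u_1, u_2, b$ avoiding all of them; then a pigeonhole on the infinite orbit $\{u_1 + kb : k \in \Z\}$ against the finite value set, followed by the identity-theorem propagation above, closes the argument. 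The delicate point that deserves the most care in the full write-up is verifying that the finite value set really is finite and does not itself grow under iteration — this is where the minimality of $n = \deg_W G$ and the fact that all translates satisfy \emph{the same} $G$ are essential.
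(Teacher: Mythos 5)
Your overall route is the same one the paper takes (Weierstrass' own argument): dispose of the rational case, use the ``tiny'' Picard theorem at the essential singularity $u=\infty$ to get an infinite level set, and let the polynomial constraint force two translates of $\varphi$ to coincide identically, which yields a period. The genuine problem is in your propagation step. You compare $\psi_1(u)=\varphi(u+p)$ with $\psi_2(u)=\varphi(u)$ and assert that, for a fixed generic $b$ with $\varphi(b)=\beta$, both are roots of the single polynomial $P_b(W,u)=G(W,\varphi(u),\beta)$. That is not what the addition theorem gives: substituting $v=b$ makes $\varphi(u+b)$ a root of $P_b$, while $\varphi(u+p)$ and $\varphi(u)$ satisfy $G[\varphi(u+p),\varphi(u),\varphi(p)]=0$ and $G[\varphi(u),\varphi(u),\varphi(0)]=0$ respectively, so the ``third slots'' are $\varphi(p)$ and $\varphi(0)$, not $\beta$, and the two functions are in general roots of different polynomials over $\mathbb{C}(\varphi(u))$. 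The common-polynomial device works only when you compare two translates $\varphi(u+a_k)$ and $\varphi(u+a_l)$ by points of one and the same level set, $\varphi(a_k)=\varphi(a_l)=C_2$: then both are roots of $G(W,\varphi(u),C_2)$. That is exactly how the paper arranges it; with this repair your identity-theorem step (coincidence at a point off the discriminant locus of $G(\cdot,\varphi(u),C_2)$, with $G$ squarefree in its first argument) is sound, and in fact it would streamline the paper's second pigeonhole into a genericity argument.

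The second gap is how the coincidence itself is produced. A single equality $\varphi(u^*+p)=\varphi(u^*)$ comes for free from the infinite level set and carries no force; what is needed is $\varphi(\upsilon+a_k)=\varphi(\upsilon+a_l)$ for two \emph{distinct} points of a common level set. The paper obtains this by a counting step that your sketch lacks: with $m=\deg_W G$, the tiny Picard theorem supplies $m+1$ points $a_0,\dots,a_m$ with $\varphi(a_i)=C_2$, and the $m+1$ numbers $\varphi(\upsilon+a_i)$ are all roots of the degree-$m$ polynomial $G(W,\varphi(\upsilon),C_2)$, so two must coincide (the paper then pigeonholes the finitely many pairs $(k,l)$ over infinitely many $u$ near $\upsilon$ and applies the identity theorem, giving the period $a_k-a_l$). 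Your substitute --- iterate the translation by $b$ and pigeonhole the orbit $\{u_1+kb\}$ against a ``finite value set'' --- does not close, for precisely the reason you flag at the end: the admissible values of $\varphi(u_1+kb)$ are constrained only by iterated resultants whose degrees grow with $k$, so there is no fixed finite target set; this is structural, not bookkeeping. Replace the iteration by the $(m+1)$-points-in-one-level count and your argument becomes essentially the paper's proof.
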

   
   \proof If $u=\infty$ is not an essential singularity, then $\varphi(u)$ is a \emph{\textbf{rational}} function, and we are finished.
   
   Suppose, then, that $u=\infty$ \emph{is} an essential singularity of  $\varphi(u)$.  Let $m$ be the degree of the polynomial $G$ in the variable $\varphi(u+v)$.  By \textsc{Weierstrass}' ``tiny" \textsc{Picard} theorem \ref{tinyeq}, there exists a number $C_2$ for which the equation\begin{equation}
\label{ }
C_2=\varphi(v)
\end{equation}has at least $m+1$ roots, $v=a_0,a_1,\cdots,a_m$. We take $\upsilon$ such that $\varphi(u)$ is analytic at each of the points $\upsilon+a_i$, $i=0,1,\cdots,m$ as well as at the point $u=\upsilon$.  Put $C_1=\varphi(\upsilon)$.  We now form the quantities\begin{equation*}
\label{ }
\varphi(\upsilon+a_0),\ \ \varphi(\upsilon+a_1), \ \ \cdots,\ \ \varphi(\upsilon+a_m)
\end{equation*}Each of them is a root of the polynomial $G(W,C_1,C_2)$.  Therefore at least two of them must be equal to each other.

We will now show that for two of the above quantities $a_i$ the equation\begin{equation}
\label{A}
\varphi(u+a_k)=\varphi(u+a_l)
\end{equation}holds in general.  First, if we limit $u$ to the neighborhood of the point $\upsilon$ it follows from the equation\begin{equation*}
\label{ }
G[\varphi(u+a_i), \varphi(u), C_2]=0, \ \ \ \ \ \ (i=0,1,\cdots,m)
\end{equation*}that the equation\begin{equation*}
\label{ }
\varphi(u+a_{\kappa})=\varphi(u+a_{\lambda})
\end{equation*}most hold for every point of this same neighborhood.  Here, $\kappa$ and $\lambda$ might be different for different values of $u$.  However, since there are only a finite number of different pairs $(\kappa,\lambda)$ while the number of points $u$ is infinite, there must exist at least one pair $\kappa=k,\lambda=l$ for which the above equation holds for infinitely many points in the neighborhood of $\upsilon$, and therefore for all points of that neighborhood.  Therefore it follows that $\varphi(u+a_k)$ and $\varphi(u+a_l)$ are one and the same analytic function, and therefore, that $\varphi(u)$ has \emph{\textbf{the period}} $a_k-a_l$.\endproof

\begin{rem} The proof shows that the ``cause'' or 
``explanation'' of the existence of a period  of the 
meromorphic function~$\varphi$ is \textit{the simultaneous occurrence of 
two antithetical properties} of~$\varphi$:
\begin{itemize}
\item the wild chaotic dispersion of values of~$\varphi$ in the 
neighbourhood of the essential singularity at infinity (an 
``irresistible force'');
\item the rigid unyielding restriction on the values of $\varphi(u)$ 
imposed by the polynomial form of the AAT(an ``immovable object'').
\end{itemize}
The mathematical resolution of this ancient philosophical conundrum is 
this: \textit{the values of $\varphi(u)$ in the neighborhood of the essential singularity $u=\infty$ are compelled by the AAT to distribute themselves into classes of equal roots in which the values of the variables differ by a constant...the \emph{\textbf{period.}}}. This beautiful 
interpretation of periodicity is due to Weierstrass.\end{rem}
 
 
 \subsection{Forsyth's proof of periodicity}

In 1893, the british mathematician \textsc{Andrew Russell Forsyth}~\cite{Forsyth} published a textbook on analytic function theory.  Chapter XIII is devoted to proving Weierstrass' theorem.

His book has been heavily criticized from the day it was published because the author apparently had not mastered the subtleties of rigorous mathematics and his book is filled with inaccuracies and outright mistakes.  See Osgood's~\cite{Osgood2} scathing review.  

As Osgood points out, Forsyth's proof of Weierstrass' theorem is vitiated by his \emph{assumption} that Phragmen's theorem on the invariance of the addition theorem polynomial holds and that if the function $\varphi(u)$ is defined for $u$ and for $v$, then it is defined for $u+v.$

Nevertheless, it is worthwhile to sketch Forsyth's novel argument for proving the \emph{\textbf{periodicity}} of $\varphi(u)$ if $\varphi(u)$ admits the AAT in all of the complex plane and is not algebraic.  Just as in the argument of Weierstrass, the fundamental assumption is that if $U$ is a value of $\varphi(u)$,\emph{ then the solution set of the equation $\varphi(u)=U$ is an infinite discrete set.}  We also assume that $\varphi(u)$ is holomorphic in the neighborhood of $u=0$ and at the origin itself.

Therefore, the members of this set form one or more \emph{infinite sequences} whose typical $n^{th}$ member ($n=0,\pm1,\pm2,\cdots$) is denoted by $f(u,n)$ and where we assume $f(u,0)=u$.  The same is true for the solution set of the equation $\varphi(v)=V$ so that $f(v,m)$ is the solution set and 
$m=0,\pm1,\pm2,\cdots$, and for the value furnished by the addition theorem $\varphi(u+v)=W$ where $f(u+v,r)$    $r=0,\pm1,\pm2,\cdots$   is the solution set so that we obtain the \emph{functional equation}

\begin{equation}
\label{fe}
f(u,n)+f(v,m)=f(u+v,r)
\end{equation}
where we assume that $f$ is differentiable in the first variable.

 If we interchange $u$ and $v$ in \eqref{fe} we also interchange $n$ and $m$, which means that  the integer-valued function $r(n,m)$ is \emph{symmetric} in $n$ and $m$.
 
Moreover, 

\begin{equation}
\label{fe1}
\frac{\partial f(u,n)}{\partial u}=\frac{\partial f(u+v,r)}{\partial u+v}=\frac{\partial f(v,m)}{\partial v}\Rightarrow \frac{\partial^2}{\partial u}\frac{\partial f(u,n)}{\partial u}=0
\end{equation}

\noindent which means that $\frac{\partial f(u,n)}{\partial u}$  is a function of $u$ and $n$ which  is \emph{independent} of $u$, i.e., it is a function \emph{only} of $n$, say $\theta(n)$.  Integrating \eqref{fe1} we obtain

\begin{equation}
\label{for}
f(u,n)=u\theta(n)+\psi(n)
\end{equation}

\noindent where ``the constant of integration" $\psi(n)$ depends only on $n$.  Substituting this formula into the functional equation \eqref{fe} we obtain 

\begin{equation}
\label{fe2}
u\theta(n)+\psi(n)+v\theta(m)+\psi(m)=(u+v)\theta(r)+\psi(r)
\end{equation}

\noindent which holds for all values of $u$ and $v$.  Differentiating both sides with respect to $u$ gives us the equation $\theta(n)=\theta(r)$ while if we differentiate it with respect to $v$ we obtain $\theta(m)=\theta(r)$, i.e.,
$$\theta(n)=\theta(r)=\theta(m).$$This means that $\theta(n)$ is a \emph{constant} and is equal to its value when $n=0$.  Substituting this value in the formula \eqref{for} and remembering that $f(u,0)=u$, we obtain
$$f(u,0)=u=u\theta(0)+\psi(0)\Rightarrow 1=\theta(0)\equiv\theta(n)\ \ \ \text{and}\ \ \ \psi(0)=0.$$Therefore, the formula \eqref{for} takes the form

 \begin{equation}
\label{for2}
f(u,n)=u+\psi(n),
\end{equation}

\noindent where $\psi(0)=0.$ Taking $u=v=0$ in \eqref{fe2} we obtain the defining equation

\begin{equation}
\label{abel}
\psi(n)+\psi(m)=\psi(r)
\end{equation}

\noindent 
of the integer-valued function $\psi(n).$  The function
 
\begin{equation} 
\label{ }
r\equiv r(n,m)
\end{equation}has the properties \begin{enumerate}
  \item $r(n,0)=n$, $r(0,m)=m$
  \item $r(n,m)=r(m,n)$
  \item $r(n_1,r(n_2,n_3))=r(r(n_1,n_2),n_3)$
\end{enumerate}To find such a function we appeal to some elementary ideas from the theory of formal group laws and suppose that $r(n,m)$ is represented by a formal power series

\begin{equation}
\label{gl}
r(n,m)\equiv n+m+\sum c_{ij}n^im^j
\end{equation}

\noindent By conditions (i) and (ii) we can conclude that 

\begin{equation}
\label{gl1}
r(n,m)\equiv n+m+nm\cdot\sum b_{ij}n^im^j\equiv m+n+nm\lambda
\end{equation}

\noindent where $b_{ij}=b_{ji}.$  Since $\psi(0)=0$, we write 

$$\psi(n)\equiv n\chi(n)$$

\noindent and \eqref{abel} becomes

\begin{equation}
\label{abel2}
n\chi(n)+m\chi(m)=(n+m+nm\lambda)\chi(n+m+nm\lambda)
\end{equation}All of the mathematics developed here, starting with the functional equation \eqref{fe} is completely rigorous.  
\
\noindent However, at this point Forsyth writes 
\begin{quote}

``Since the left-hand side is the sum of two functions of distinct and independent magnitudes, the form of the equation shews that it can be satisfied only if $\lambda=0$, so that..."

\end{quote}

\noindent We do not understand what this sentence means.  We suppose that he argues that the right hand side of \eqref{abel2} must also be ``the sum of two functions of distince and independent magnitudes..." which certainly does occur if we take $\lambda=0,$ but this is not true in general as the equation $\ln x+\ln y =\ln (xy)$ immediately shows.

Moreover we have not been able to supply an independent argument that necessarily $\lambda=0.$  Indeed, it is not obvious that it is true.   But, if we \emph{assume} it to be the case we can conclude 
$$r=n+m\Rightarrow \chi(n)=\chi(r)=\chi(m)=\text{a constant} \equiv \omega\ \ \text{say}.$$Therefore, the final formula for $f(u,n)$ is \begin{equation}
\label{ }
\boxed{f(u,n)=u+n\omega}
\end{equation}That is to say, $\varphi(u)$\emph{is \textbf{periodic} with period} $\omega.$  Admittedly the argument is incomplete, but it is interesting and merits further study. (\footnote{The function $r(m,n)$ satisfies the famous \textsc{Abel} \emph{associativity functional equation}, the equation (iii), and it is well known that its general solution is of the form $r(m,n)= R^{-1}\{R(m)+R(n)\}$ for a suitable function $R$ and where $R^{-1}$ is the inverse function (see Aczel~\cite{Aczel}). Forsyth tries to prove that the function $R(m)=cm$ for some suitable constant $c$. \textsc{Gjergji Zaimi} and \textsc{Michael Somos} in Mathoverflow, question 288554, independently showed that if $\lambda$ is a \emph{constant} distinct from zero , then Forsyth's equation \eqref{abel2} has no nonzero solution.  Moreover their reasoning holds true if $\lambda$ is different from zero for all sufficiently large $\lambda$.  But there is no reason assume that.})
\\

 
 \section{Multiform functions having an AAT}
 
 In this section, the reader must know the elements of analytic continuation and the \textsc{Weierstrass} concept of a \emph{\textbf{complete analytic function}}.  We will preface our treatment of multiform functions which admit an AAT with some of the basic definitions and theorems of the \textsc{Weierstrass} theory.
 
 
 \subsection{Analytic Coninuation}
 
 We begin with the \textsc{Weierstrass} definition of a holomorphic function.

 \definition  \emph{A complex-valued function $f$ defined on an open subset $D\subset \mathbf{C}$ is called \emph{\textbf{holomorphic in D}} if each point $a\in D$ has an open neighborhood $U$, $a\in U\subset D$, such that the function $f$ has a power series expansion\begin{equation}
\label{holo}
\boxed{f(z)=\sum_{n=0}^{\infty}a_n(z-a)^n}
\end{equation}which converges for all $z\in U$.}

Usually $U$ is the open disc, $K(a)$ of radius $r(a)$ which is the interior of the circle of convergence of the power series\eqref{holo}.

\definition \emph{If $s$ is a point of $K(a)$, then $f(z)$ can be expanded in a power series \begin{equation}
\label{rearr}
\boxed{f(z)=\sum_{n=0}^{\infty}b_n(s)(z-s)^n}
\end{equation}of powers of $(z-s)$ which is obtained from \eqref{holo} by expanding the powers\begin{equation*}
\label{ }
(z-a)^n=[(z-s)+(s-a)]^n
\end{equation*}by the binomial theorem and arranging the series \eqref{holo} in powers of $(z-s)$.  We say that the series \eqref{rearr} arises from the series \eqref{holo} as a result of \textbf{rearranging} the series at $s$}

\definition \emph{If \begin{equation*}
\label{ }
r(s)>r(a)-|s-a|
\end{equation*}then the circle of convergence $K(s)$ of \eqref{rearr} extends beyond the circle $K(a)$, and \eqref{rearr} yields an \textbf{analytic continuation} of the function defined by \eqref{holo} in $K(a)$ into the part of $K(s)$ not covered by $K(a)$.}
\\
\\
The power series \eqref{holo} and \eqref{rearr} yield the \emph{\textbf{same}} functional values in the disc \begin{equation*}
\label{ }
|z-s|<r(a)-|s-a|.
\end{equation*}This is so because this disc is contained in the intersection $D$ of $K(a)$ and $K(s)$ and, since both power series converge in $D$, their values must coincide in $D$.

Let $b$ be a point of the $z$-plane and let $L$ be a curve joining $a$ to $b$.  $L$ is determined by a complex-valued function $z=\varphi(t)$ defined and continuous on a real interval $\alpha \leq t\leq \beta$.  If we divided the interval $\alpha \leq t\leq \beta$ into $n$ subintervals by means of an increasing sequence of points $\alpha:=\gamma_0, \gamma_1,\cdots, \gamma_n:=\beta$, then $L$ splits into $n$ subarcs $L_1,L_2,\cdots,L_n$ with $L_g\ \ (g=1,2,\cdots,n)$  joining $\varphi(\gamma_{g-1}):=c_{g-1}$ to $\varphi(\gamma_{g}):=c_{g}$.  Now let $K_0,K_1,\cdots,K_n$ be disks with centers $a:=c_0,c_1,\cdots,c_n:=b$, and let the closed arc $L_g$ lie completely in $K_{g-1}$ for $g=1,\cdots,n$.  Further let $f_0$ be the power series \eqref{holo} and for $g=1,\cdots,n$ let $f_g$ which results from rearrangement of $f_{g-1}$ at $c_g$ converge in $K_g$.  Then $f_0, f_1,\cdots, f_n$ have been recursively defined.

\definition\emph{We say that the power series $f_n(z)$ is the result of \textbf{analytic continuation of the power series $f_0(z)$ along the curve L.}}
\\
\\
It can be shown that this analytic continuation is independent of the choice of division points on $L$.

Now we state the fundamental definition.
 
\definition \emph{The different power series obtained by means of all possible analytic continuations of \eqref{holo} to arbitrary points in $D$ are called \emph{\textbf{ function elements}} or \emph{\textbf{single-valued branches}} of the \emph{\textbf{complete analytic function}} f determined by the power series \eqref{holo}}

This allows us to introduce the concept of a multiform function.

\definition \emph{A complete analytic function is \textbf{multiform} at a point if it has more than one analytic continuation to that point. Otherwise it is \textbf{uniform} or \textbf{single-valued}.}
\\
\\
A fundamental theorem in the theory of analytic continuation is the \textbf{permanence of the functional equation}.


 We shall consider a function $F(z,w_1,\cdots,w_n)$, analytic in each of the variables $z, w_1,\cdots,w_n$ in the regions $R_1,\cdots,R_n$, respectively.  Next we shall assume that there is a circular neighborhood $D_0:=\{z: |z-a|<r\subset R$ and $n$ analytic functions $f_k(z)$ in $D_0$ with function values in the respective regions $R_k$, i.e., such that $f_k(D_0)\subset R_k\ (k=1,\cdots, n)$ and such that the equation\begin{equation}
\label{fe}
F\{z,f_1(z),\cdots,f_n(z)\}=0
\end{equation}holds identically for $z\in D_0$.  In this situation we ay that  \eqref{fe} is a functional equation in the $w_k$, and the following theorem is called the \textbf{\emph{principle of permanence of functional equations}}, also known as \emph{\textbf{the invariance theorem for functional equations}}.

\begin{theorem} \begin{quote}.

Suppose that the functions $f_k(z)$ can be continued analytically along the rectifiable arc $\gamma: \ z=\psi(t),0\leq t\leq 1$ connecting the point $a\in R$ to the point $b\in R$.  Also, suppose that for each $t\in\left[0,1\right]$ there is a disk $D_t:=\{z: \ |z-\psi(t)|<r(t)\}$ such that $D_t\subset R$ and such that $f_{kt}(z)$ is analytic in $D_t$ with $f_{kt}(D_t)\subset R_k\ \ (k=1,\cdots,n)$.  Then the following equation \begin{equation}
\label{ }
F\{z,f_{1t}(z),\cdots,f_{nt}(z)\}=0
\end{equation}holds true for all $z\in D_t, 0\leq t\leq 1$. \end{quote}\end{theorem}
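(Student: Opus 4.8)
The plan is to prove this by a standard connectedness (continuity) argument on the parameter interval $[0,1]$, exploiting the local rigidity of analytic identities together with the uniqueness of analytic continuation. Define
\[
S:=\set{t\in[0,1]:\ F\{z,f_{1t}(z),\dots,f_{nt}(z)\}=0 \text{ for all } z\in D_t}.
\]
We know $0\in S$ by hypothesis (the disk $D_0$ and the identity \eqref{fe}), so $S\neq\varnothing$. The goal is to show $S=[0,1]$; since $[0,1]$ is connected, it suffices to show $S$ is both open and closed in $[0,1]$. The conclusion of the theorem then follows immediately.

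First I would check \emph{openness}. Fix $t_0\in S$. The continuations $f_{k,t}$ for $t$ near $t_0$ are obtained from the function element at $t_0$ by rearranging the power series at nearby centers $\psi(t)$ (for $|t-t_0|$ small, $\psi(t)\in D_{t_0}$ by continuity of $\psi$), so on the overlap $D_{t_0}\cap D_t$ the function elements $f_{k,t_0}$ and $f_{k,t}$ represent the same holomorphic function. Since $F$ is analytic in all its arguments and $F\{z,f_{1,t_0}(z),\dots\}\equiv 0$ on $D_{t_0}$, the composite $F\{z,f_{1,t}(z),\dots,f_{n,t}(z)\}$ is holomorphic on $D_t$ and vanishes on the open set $D_{t_0}\cap D_t$; by the identity theorem it vanishes on all of $D_t$. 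Hence a whole neighborhood of $t_0$ in $[0,1]$ lies in $S$, so $S$ is open. (One must be slightly careful that the intermediate function elements $f_{k,t}$ actually take values in $R_k$; but this is exactly the standing hypothesis $f_{kt}(D_t)\subset R_k$, so the composition with $F$ is legitimate.)

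Next, \emph{closedness}. Let $t_0$ be a limit point of $S$, with $t_j\to t_0$, $t_j\in S$. For $j$ large, $\psi(t_j)\in D_{t_0}$, so again $f_{k,t_j}$ and $f_{k,t_0}$ agree on the overlap $D_{t_j}\cap D_{t_0}$, which is a nonempty open set. On that overlap $F\{z,f_{1,t_0}(z),\dots\}=F\{z,f_{1,t_j}(z),\dots\}=0$, and since the left-hand side is holomorphic on the connected open set $D_{t_0}$, the identity theorem forces it to vanish throughout $D_{t_0}$. Thus $t_0\in S$, so $S$ is closed. Combining, $S$ is a nonempty clopen subset of the connected set $[0,1]$, hence $S=[0,1]$, which is the assertion.

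The main obstacle — really the only subtle point — is the bookkeeping that guarantees consecutive function elements $f_{k,t}$ and $f_{k,t'}$ genuinely \emph{agree on an open overlap} rather than merely being formal continuations of one another, and that each intermediate $f_{k,t}$ sends its disk $D_t$ into the correct region $R_k$ so that $F\{z,f_{1,t}(z),\dots,f_{n,t}(z)\}$ is defined and analytic. Both are secured by the hypotheses (the disks $D_t\subset R$ with $f_{kt}(D_t)\subset R_k$, and the compatibility built into the definition of analytic continuation along $\gamma$), together with the earlier remark that rearrangement of a power series at an interior point of its disk of convergence yields the same function on the common subdisk. Once that is in place, the identity theorem does all the work and the connectedness of $[0,1]$ finishes the proof.
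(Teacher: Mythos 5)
Your argument is correct, and it is worth noting that the paper itself offers no proof of this statement: it is quoted as a known ``fundamental theorem in the theory of analytic continuation'' (the principle of permanence of functional equations) and then immediately applied, so there is no in-paper argument to compare against. What you have supplied is the standard textbook proof: the set $S$ of parameters $t$ for which $F\{z,f_{1t}(z),\dots,f_{nt}(z)\}\equiv 0$ on $D_t$ is nonempty (it contains $t=0$ by the hypothesis on $D_0$), and it is open and closed in $[0,1]$ because nearby elements $f_{k,t}$ and $f_{k,t_0}$ agree on the overlap $D_t\cap D_{t_0}$ (a nonempty, connected set, being an intersection of disks), so the identity theorem propagates the vanishing of the holomorphic composite from the overlap to the whole disk; connectedness of $[0,1]$ then gives $S=[0,1]$. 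The only point requiring care is the one you flag yourself: that for $|t-t_0|$ small the element $f_{k,t}$ really is the rearrangement of $f_{k,t_0}$ at $\psi(t)$, hence equal to it near $\psi(t)$ and then, again by the identity theorem, on all of $D_t\cap D_{t_0}$; this is the standard compatibility lemma for continuation along an arc (a compactness/Lebesgue-number argument over the chain of disks), consistent with the paper's definition of continuation along a curve, and it also guarantees $f_{kt}(D_t)\subset R_k$ so the composition with $F$ is legitimate. With that lemma made explicit, your proof is complete and fills a gap the paper leaves to the reader.
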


In other terms:  \emph{The analytic continuations of the solutions of a functional equation are solutions of the analytic continuation of the equation. } If a given element of an analytic function satisfies a certain functional equation, under the conditions of the permanence of the functional equations we apply the functional equation to obtain a continuation of the function to a larger region of analyticity, provided such a continuation exists.

 
 \subsection{Algebroid Functions}
 
 The multiform functions which interest us are the algebroid functions.
 
 \definition  \emph{A function $z=\varphi(u)$ defined by a particular element is \emph{\textbf{algebroid}}  in the interior of a given domain if, and only if, all of its values which may obtained by continuing the element along paths which totally belong to the given domain satisfy an equation of the form\begin{equation}
\label{algebroid}
p_0(u)z^n+p_1(u)z^{n-1}+\cdots+p_n(u)=0
\end{equation} where $p_0,\cdots,p_n$ are holomorphic in the interior of that same domain.}
\\
\\
One obtains, as a particular case, the \emph{\textbf{algebraic}} functions if the domain is the entire complex plane. 

 We will cite some properties of algebroid functions in the following.  Their proofs are word-for-word identical to the proofs for the corresponding properties of the algebraic functions.  The book by \textsc{Saks} and \textsc{Zygmund}~\cite{Saks} is a good reference for this material.

An algebroid function has two types of singular points in the interior of its domain: \textbf{\emph{poles}} and \textbf{\emph{branch points.}}  Moreover, a point may be both a pole and a branch point simultaneously.

The general theorem which characterizes algebroid functions is the following:

\begin{theorem}\begin{quote}.

Suppose that\begin{equation}
\label{FundThm}
F(u,z):=p_0(u)z^n+p_1(u)z^{n-1}+\cdots+p_n(u)
\end{equation}as a polynomial in $z$ is irreducible in the field of rational functions and that the $p_k(u)$ are holomorphic in a domain $D$.  Then the equation\begin{equation}
\label{ }
F(u,z)=0
\end{equation}defines a \textbf{complete analytic function} $z=\varphi(u)$ for all $u$ in $D$.  The singularities of $\varphi(u)$ come from three different sources:\begin{enumerate}
  \item The zeros of $p_0(u)$ are the \textbf{poles} of one or more of the branches of $\varphi(u)$.
  \item The roots of the equation in $u$ obtained by elimination of $z$ from the simultaneous equations\begin{equation}
\label{ }
F(u,z)=0, \ \ \ \frac{\partial F(u,z)}{\partial z}=0
\end{equation}may be \textbf{critical points} of some or all branches, but all branches tend to finite limits as $u$ approaches such a point.
  \item The point at infinity may be a pole or a critical point for some of the branches
\end{enumerate}At any other point, $\varphi(u)$ has $n$ holomorphic, distinct elements.  At the singular points the branches form \textbf{cycles} and there are one or more algebraic elements besides regular and polar elements.\end{quote}

 \end{theorem}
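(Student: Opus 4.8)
The plan is to build $\varphi(u)$ branch-by-branch from the irreducible polynomial $F(u,z)$, using the classical theory of algebraic functions but replacing ``rational'' with ``holomorphic in $D$'' wherever $p_0,\dots,p_n$ occur. First I would fix a base point $u_0 \in D$ that is not a zero of $p_0$ and not a root of the $z$-discriminant $\Delta(u)$ of $F$ (these are the $u$ obtained by eliminating $z$ from $F=\partial F/\partial z=0$). Since $F$ is irreducible over the field of rational functions — and, because the coefficients are holomorphic, the discriminant $\Delta(u)$ is holomorphic and not identically zero — the set $B$ of such ``bad'' points is discrete in $D$. At such a $u_0$, $F(u_0,z)$ has $n$ distinct simple roots $z_1,\dots,z_n$; by the implicit function theorem each gives a holomorphic function element $\varphi_j$ near $u_0$ with $F(u,\varphi_j(u))\equiv 0$.

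Next I would invoke the permanence of the functional equation (the invariance theorem stated just above) to propagate the identity $F(u,\varphi(u))=0$ along every path in $D\setminus B$: continuation of a solution of $F=0$ is again a solution of $F=0$, so the complete analytic function generated by $\varphi_1$ stays within the root set of $F$ at every point. Monodromy over $D\setminus B$ permutes the $n$ sheets; irreducibility of $F$ forces this monodromy group to act transitively, so all $n$ roots belong to a single complete analytic function $\varphi(u)$, and off $B$ it has exactly $n$ holomorphic, distinct elements. This handles the generic part of the assertion.

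The real work is the local analysis at the three families of singular points. At a zero $a$ of $p_0$, I would clear the leading coefficient by the substitution $w = p_0(u)\,z$, obtaining a monic polynomial $\widehat F(u,w) = w^n + p_1 w^{n-1} + p_0 p_2 w^{n-2} + \cdots + p_0^{n-1} p_n$ with holomorphic coefficients; the branches of $w$ stay bounded near $a$ (standard estimate from the monic equation), so the $z$-branches behave like $w/p_0(u)$ and hence have at worst poles — this gives item (i). At a root of the discriminant $\Delta$, I would use the Puiseux expansion: on a punctured disk about such a point the monodromy permutes the sheets in cycles, and on each cycle of length $k$ the branch is holomorphic in the local parameter $t=(u-a)^{1/k}$; since $\Delta$ has only a zero of finite order there, the Puiseux series has no negative powers, so every branch tends to a finite limit — this gives item (ii), including the ``cycles'' and ``algebraic elements'' statement. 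Item (iii) follows by applying the same two analyses after the change of variable $u\mapsto 1/u$, keeping track of how holomorphy at infinity is defined for the $p_k$.

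The main obstacle I expect is the Puiseux/cycle analysis at the branch points done carefully enough to justify simultaneously (a) the finiteness of limits, (b) the cyclic structure of the local monodromy, and (c) the classification of local elements into regular, polar, and algebraic. One must verify that the local algebraic factor of $F$ over the ring of convergent Puiseux series is what drives the cycle length, and that the holomorphy (rather than mere rationality) of the $p_k$ near $a$ guarantees the Puiseux coefficients are themselves convergent — this is exactly the point where the proof is ``word-for-word identical'' to the algebraic case, and I would cite Saks--Zygmund~\cite{Saks} for the details rather than reproduce them. Everything else — discreteness of $B$, transitivity of the monodromy from irreducibility, and the pole estimate at zeros of $p_0$ — is routine once the substitution $w=p_0(u)z$ is in place.
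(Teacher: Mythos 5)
Your sketch is correct in outline, but note that the paper itself offers no proof of this theorem: it is stated as a catalogue of known properties of algebroid functions, with the remark that the proofs are word-for-word identical to those for algebraic functions and a pointer to Saks--Zygmund, followed only by an illustrative example. What you have written is exactly that classical argument (discreteness of the bad set, the implicit function theorem off it, permanence of the functional equation, transitivity of the monodromy from irreducibility, the substitution $w=p_0(u)z$ at zeros of $p_0$, Puiseux expansions at discriminant zeros, and $u\mapsto 1/u$ at infinity), so you are filling in precisely what the paper delegates to the literature. Two small points deserve care if you were to write this out in full. First, for the monodromy-transitivity step you need irreducibility over the field of meromorphic functions on $D$ (the paper's phrase ``irreducible in the field of rational functions'' cannot be meant literally when the $p_k$ are merely holomorphic); the standard argument is that a proper monodromy orbit would produce, via its elementary symmetric functions, a factor of $F$ with coefficients meromorphic on $D$, contradicting irreducibility. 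Second, at a discriminant zero $a$ with $p_0(a)\neq 0$, the absence of negative powers in the Puiseux series follows from the boundedness of the roots of the locally monic polynomial obtained by dividing by $p_0$ (the same estimate you already use for item (i)), not from the finite order of vanishing of $\Delta$; as written, that clause does not justify the conclusion. With those repairs the proposal is a faithful reconstruction of the proof the paper cites rather than reproduces.
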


 \example The following example illustrates the different possibilities.  Let $z=\varphi(u)$ be defined by the equation \begin{equation}
\label{ }
8uz^3+3(1-u)z+1-u=0
\end{equation}Then the branch points are at $u=1$ (repeated) and at $u=-1$.  The only pole of $\varphi(u)$ is at $u=0$.

If $u=1$, then   the repeated value is $z=0$ occurring thrice and  one obtains the \emph{\textbf{algebraic element}}\begin{equation}
\label{ }
\varphi(u)=\frac{1}{2}(u-1)^{\frac{1}{3}}
\end{equation}and the three values are branches of one system of cyclical order for a circuit around $u=1$.

If $u=-1$, then either $\varphi_1=1$, which is an isolated non-repeated value, or the repeated value of $\varphi(u)$ is $\varphi_2(-1)=\varphi_3(-1)=-\frac{1}{2}$ occurring twice, and the expansions are\begin{eqnarray}
\varphi_1(u)&=&1+\frac{2}{9}(u+1)+\cdots\\
\varphi_2(u) & = & -\frac{1}{2}+\frac{1}{2\sqrt{6}}(u+1)^{\frac{1}{2}}+\cdots \\
\varphi_3(u) & = & -\frac{1}{2}-\frac{1}{2\sqrt{6}}(u+1)^{\frac{1}{2}}+\cdots 
\end{eqnarray}and the \emph{\textbf{algebraic elements}} $\varphi_2(u)$, $\varphi_3(u)$ are cyclically interchangeable for a small circuit round $u=-1$.

If $u=0$, it is a regular point for one branch $\varphi_3(u)$ of $\varphi(u)$ while it is a pole of order $-\frac{1}{2}$ for the other two branches, $\varphi_1(u)$ and $\varphi_2(u)$, and thus also a branch point for them, and they are cyclically interchangeable for a small circuit around $u=0$. The three expansions of the \emph{\textbf{holomorphic element}} $\varphi_3(u)$ and the \emph{\textbf{algebraic polar elements}} $\varphi_1(u),\varphi_2(u)$ are\begin{eqnarray}
\varphi_1(u) & = & \sqrt{\frac{3}{8}}iu^{-\frac{1}{2}}+\frac{1}{6}- \frac{17}{18}\sqrt{\frac{3}{8}}iu^{\frac{1}{2}}-\frac{4}{81}u-\frac{275}{1944}\sqrt{\frac{3}{8}}iu^{\frac{3}{2}}-\frac{4}{729}u^2+\cdots\\
\varphi_2(u) & = &  -\sqrt{\frac{3}{8}}iu^{-\frac{1}{2}}+\frac{1}{6}+ \frac{17}{18}\sqrt{\frac{3}{8}}iu^{\frac{1}{2}}-\frac{4}{81}u+\frac{275}{1944}\sqrt{\frac{3}{8}}iu^{\frac{3}{2}}-\frac{4}{729}u^2+\cdots\\ 
\varphi_3(u)&=&-\frac{1}{3}+\frac{8}{81}u+\frac{8}{729}u^2+\cdots
\end{eqnarray}

 \begin{coro}\begin{quote}.
 
 \emph{A single-valued algebroid function is \textbf{meromorphic}}.\end{quote}\end{coro}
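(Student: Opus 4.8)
The plan is to obtain the corollary as an immediate consequence of the structure theorem for algebroid functions just stated. The key observation is that a \emph{single-valued} algebroid function $\varphi$ cannot have a branch point: at a branch point the theorem says the branches ``form cycles,'' i.e.\ two or more distinct branches are permuted along a small circuit, and that is precisely the situation ruled out by the hypothesis that $\varphi$ has only one analytic continuation to each point of its domain $D$. So among the three possible sources of singularities listed in the theorem, the second one does no harm: at a zero of the discriminant (a root of the system $F=0,\ \partial F/\partial z=0$) the theorem guarantees that every branch tends to a finite limit, so our one branch $\varphi$ is bounded and holomorphic on a punctured neighbourhood of such a point, and by Riemann's removable-singularity theorem it extends holomorphically across it. Likewise, a zero of $p_0(u)$ can only be an ordinary pole of $\varphi$, with no accompanying branching (no ``algebraic polar element'' of fractional order, since that would again force $\varphi$ to be multiform there).

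First I would make precise that the singular set is discrete. After passing to the irreducible factor of $F$ that $\varphi$ actually satisfies (a reduction discussed below), so that $p_0\not\equiv 0$ and the theorem applies, the zeros of $p_0$ and the zeros of the discriminant form a discrete subset $S$ of $D$, and on $D\setminus S$ the theorem provides $n$ distinct holomorphic elements, one of which is $\varphi$; hence $\varphi$ is holomorphic there. Combining this with the previous paragraph: $\varphi$ is holomorphic on all of $D$ except at the isolated points of $S$, and at each such point it either extends holomorphically or has a pole. That is exactly the definition of a function meromorphic in $D$, which proves the corollary. (If the domain is the whole finite plane one adds the remark that the point at infinity is then allowed to be a pole as well, so ``meromorphic in finite space'' is the appropriate conclusion.)

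I would also record the slicker route: by the theorem the complete analytic function attached to an irreducible $F$ has exactly $n$ distinct holomorphic branches at a generic point, and since these are all analytic continuations of one another, single-valuedness forces $n=1$; then the defining relation reads $p_0(u)\varphi(u)+p_1(u)=0$, so $\varphi=-p_1/p_0$ is a quotient of two holomorphic functions and is visibly meromorphic. The one step that needs care --- and the only place where there is any work to do beyond quoting the structure theorem --- is the preliminary reduction to an irreducible defining equation with \emph{holomorphic} coefficients: one factors $F(u,z)$ over the ring of functions holomorphic in $D$ by a Gauss-lemma argument and checks that $\varphi$ satisfies one of the irreducible factors. Once that is in hand, everything else is a matter of reading off which of the three types of singular point can survive the single-valuedness hypothesis, and the answer is: only genuine poles.
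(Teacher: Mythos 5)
Your argument is correct and follows essentially the route the paper intends: the paper states this corollary as an immediate consequence of the structure theorem for algebroid functions (referring the standard details to Saks--Zygmund), and your proof simply fleshes that out --- single-valuedness rules out the cyclic (branched) elements, leaving only regular and polar elements, hence $\varphi$ is meromorphic. Your ``slicker route'' (irreducibility plus single-valuedness forces $n=1$, so $\varphi=-p_1/p_0$) is a fine equivalent shortcut within the same framework.
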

 
 \begin{coro}\begin{quote}.
 
  \emph{A symmetric rational function of those branches of an algebroid function which form a cycle is \textbf{meromorphic}.}\end{quote}\end{coro}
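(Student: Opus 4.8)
The plan is to reduce everything to a local statement at a branch point and then let the symmetry of the function absorb the monodromy. Fix a branch point $u=a$ of the algebroid function $\varphi$ (the case $a=\infty$ being identical after the substitution $u\mapsto 1/u$), and let $\varphi_1,\dots,\varphi_m$ be the branches forming a cycle there. I would choose a punctured disc $U=\{0<|u-a|<\rho\}$ containing no singularity of these branches except poles and small enough that one counterclockwise circuit about $a$ sends $\varphi_1\to\varphi_2\to\cdots\to\varphi_m\to\varphi_1$ after relabeling. The first step is to invoke the Puiseux uniformization of the cycle — part of the algebroid machinery whose proofs, as noted above, are word-for-word those of the algebraic case, see \cite{Saks}: setting $u=a+t^m$, each $\varphi_j$ becomes single-valued in $t$ with a Laurent expansion $\psi(t)=\sum_{k\geq -N}c_k t^k$ having only finitely many negative powers, and $\varphi_j(u)=\psi(\zeta^{j-1}t)$ with $\zeta=e^{2\pi i/m}$ and $t=(u-a)^{1/m}$ (one fixed determination of the root). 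In particular $\psi$ is meromorphic at $t=0$.

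Next I would write the given symmetric rational function as $R=P/Q$ with $P,Q$ symmetric polynomials and form $g(u)=R(\varphi_1(u),\dots,\varphi_m(u))$. Substituting the Puiseux data, the numerator and denominator become $P(\psi(t),\psi(\zeta t),\dots,\psi(\zeta^{m-1}t))$ and $Q(\psi(t),\psi(\zeta t),\dots,\psi(\zeta^{m-1}t))$; replacing $t$ by $\zeta t$ permutes the arguments cyclically, so by symmetry of $P$ and $Q$ both are unchanged, hence (only powers of $t$ divisible by $m$ surviving in their Laurent series) each is a single-valued meromorphic function of $t^m=u-a$. Being built from $\psi$ by finitely many additions and multiplications, each has at worst a pole at $t=0$, so each descends to a function meromorphic on the full disc $|u-a|<\rho$. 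Provided $g$ is not the constant $\infty$, i.e. the descended denominator is not identically $0$, the function $g$ is then a quotient of two functions meromorphic near $a$, hence meromorphic there; away from branch points each $\varphi_j$ is already meromorphic, so $g$ is meromorphic throughout the domain, which is the assertion.

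I expect the only non-routine ingredient to be the Puiseux uniformization in the first step; once the cycle is parametrized by $t$ with $\varphi_j(u)=\psi(\zeta^{j-1}t)$, the rest is the elementary observation that a symmetric function of $\psi(t),\psi(\zeta t),\dots,\psi(\zeta^{m-1}t)$ is invariant under $t\mapsto\zeta t$, together with the fact that a single-valued function holomorphic on a punctured disc with at most a pole-type singularity is meromorphic. If one wishes to bypass Puiseux series altogether, the same conclusion follows directly: continuation around $a$ permutes the $\varphi_j$, so a symmetric function of them is single-valued in $U$; the algebroid property forces $|\varphi_j(u)|=O(|u-a|^{-N})$ near $a$, hence $g$ has at most polynomial growth at $a$; and a single-valued holomorphic function on a punctured disc with polynomial growth at the puncture extends meromorphically across it.
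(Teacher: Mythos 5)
Your proposal is essentially correct, but note that the paper does not actually prove this corollary at all: it is listed among the properties of algebroid functions whose proofs are declared to be ``word-for-word identical'' to those for algebraic functions, with \textsc{Saks}--\textsc{Zygmund} cited as the reference. So you are supplying an argument where the paper supplies none, and the one you give is the standard one: Puiseux uniformization $u=a+t^m$, the observation that a symmetric polynomial in $\psi(t),\psi(\zeta t),\dots,\psi(\zeta^{m-1}t)$ is invariant under $t\mapsto\zeta t$ and hence a meromorphic function of $t^m=u-a$, and then the quotient of two such for a symmetric rational function (you correctly flag the caveat that the descended denominator must not vanish identically). Two small cautions. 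First, a cycle is a local object attached to the particular singular point $a$; your argument guarantees single-valuedness of $g$ only in a punctured neighborhood of $a$, since continuation around \emph{other} singular points may mix the branches of this cycle with branches outside it, so the final claim that $g$ is meromorphic ``throughout the domain'' should either be restricted to a neighborhood of $a$ (which is the sense in which the corollary is stated and used, e.g. in the passage from algebraic elements to regular and polar ones) or be supplemented by an argument that the monodromy elsewhere preserves the set $\{\varphi_1,\dots,\varphi_m\}$. Second, in your ``bypass'' variant the inference from $|\varphi_j(u)|=O(|u-a|^{-N})$ to polynomial growth of $g$ is not immediate, because an upper bound on the branches gives no lower bound on the symmetric denominator $Q(\varphi_1,\dots,\varphi_m)$; the correct form of that shortcut is to apply the growth-plus-single-valuedness argument separately to the symmetric numerator and denominator (each is then meromorphic at $a$) and take the quotient, exactly as your main Puiseux argument already does.
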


 \subsection{\textsc{Weierstrass'} Lemma on Analytic Continuation}
 
 The following fundamental lemma is due to \textsc{Weierstrass} (reproduced by Falk~\cite{Falk} from Weierstrass' lectures in the winter semester 1885) 
 
 \begin{theorem}\begin{quote}.
 
 Let $\varphi(u)$ be an analytic function which is holomorphic at $u=0$.  Suppose one of its function elements $P(u)$ satisfies an \emph{\textbf{ALGEBRAIC EQUATION OF THE FORM}} \begin{equation}
\label{initial}
\boxed{f\left[P\left(\frac{u}{2}\right),P(u)\right]=0}
\end{equation}Then, for any arbitrarily large, but \emph{\textbf{FINITE}} number $R$, the function \emph{\textbf{exists}} in the \emph{\textbf{entire}} domain \begin{equation}
\label{}
|u|<R
\end{equation}and is \emph{\textbf{ALGEBROID}} there.
  \end{quote}
 \end{theorem}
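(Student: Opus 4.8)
The plan is to bootstrap from a small disc to the disc $|u|<R$ by repeatedly doubling the radius, using the functional equation $f[P(u/2),P(u)]=0$ in two directions. Start with the disc $|u|<\rho$ on which the germ $P(u)$ at $u=0$ is holomorphic. I would first read the relation the other way around: set $v=u/2$, so that $f[P(v),P(2v)]=0$ expresses $P(2v)$ algebraically in terms of $P(v)$. Since $P$ is holomorphic (hence defined and finite, with finitely many values — here just one) on $|v|<\rho$, the principle of permanence of functional equations (the invariance theorem quoted above) lets me continue $P$ analytically along every path in $|v|<\rho$ and conclude that the continuation satisfies $f[P(v),\text{(continued }P\text{ at }2v)]=0$; that is, the continuation of $P$ exists over $|u|<2\rho$ and at each point there is an algebraic relation tying its value to a value of $P$ at the half-point, which already lies in the region controlled at the previous stage. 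Iterating $k$ times reaches the disc $|u|<2^k\rho$, and choosing $k$ with $2^k\rho>R$ covers $|u|<R$.

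The second and more delicate task is to show the resulting complete analytic function is \emph{algebroid} on $|u|<R$, i.e. that all branches obtained by continuation inside $|u|<R$ satisfy one fixed polynomial equation $p_0(u)z^n+\cdots+p_n(u)=0$ with coefficients holomorphic there. Here the idea is to track the \emph{number of branches}. On the starting disc there is one branch. The relation $f[w_1,w_2]=0$, viewed as a polynomial in $w_2$ of some degree $d_2$, shows that over a point $u$ the possible values of $P(u)$ number at most $d_2$ times the number of values of $P(u/2)$; but used in the doubling direction it says that each value at $u$ is an algebraic function of the value at $u/2$, so continuation cannot create more than a bounded branching. More precisely, I would argue by induction that on $|u|<2^k\rho$ the function has at most $N_k$ branches for an explicit $N_k$ depending only on the degrees of $f$, and that these branches are exactly the roots of the polynomial obtained by eliminating the auxiliary variable from the chain of relations $f[z_0,z_1]=0,\ f[z_1,z_2]=0,\dots,f[z_{k-1},z_k]=0$ with $z_j=P(u/2^{k-j})$, after substituting for $z_0$ the (meromorphic) branch on the small disc. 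The elimination (resultant) produces a single polynomial in $z_k$ and $u$ whose coefficients are holomorphic on $|u|<2^k\rho$ except possibly where leading coefficients vanish; dividing out common factors and clearing denominators gives the required form \eqref{algebroid}. Finiteness of the branch count together with the boundedness/finite-limit statements for algebroid functions (the characterization theorem and its corollaries cited above) guarantees that the apparent singularities are only poles and branch points, so the continued function is genuinely algebroid on $|u|<R$.

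The main obstacle I expect is controlling the branching: making rigorous the claim that doubling the radius only multiplies (or otherwise boundedly increases) the number of branches, and that a single polynomial equation with holomorphic coefficients captures \emph{all} of them simultaneously over the whole disc $|u|<R$ — rather than a different polynomial near each point. This is essentially a resultant/elimination argument applied to the finite chain of relations above, but one must be careful that the elimination does not introduce spurious branches and that the leading coefficients $p_0(u)$ do not vanish identically, so that the polynomial is a genuine degree-$n$ relation. A secondary technical point is the base step: one must check that $P$ is not merely a formal power series but converges on a genuine disc and takes values in the region where $f$ is analytic in its arguments, so that the invariance theorem applies at the first doubling; this is where the hypothesis that $\varphi$ is holomorphic at $u=0$ is used. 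Once the chain of relations and the single eliminated polynomial are in hand, the conclusion that the function exists and is algebroid on $|u|<R$ follows from the quoted structure theorem for algebroid functions.
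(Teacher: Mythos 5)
Your second paragraph, together with your closing sentence, is essentially the paper's own argument: iterate the substitution $u\mapsto u/2$ to obtain the chain $f(x_1,x)=0,\ f(x_2,x_1)=0,\ \dots,\ f(x_m,x_{m-1})=0$ with $x_j:=P(u/2^{j})$, eliminate the intermediate quantities $x_1,\dots,x_{m-1}$ to get a single polynomial relation $\gamma(x_m,x)=0$, observe that its coefficients $\gamma_j(x_m)=\gamma_j\bigl[P(u/2^{m})\bigr]=:p_j(u)$ are honest single-valued holomorphic functions on $|u|<2^{m}\rho$ (because $u/2^{m}$ maps that disc into the disc of convergence of $P$), choose $m$ with $2^{m}\rho>R$, and then let the structure theorem for algebroid functions do the rest: the germ $P(u)$ satisfies this equation on $|u|<\rho$, hence is an element of one of its $n$ branches, so the function exists and is algebroid throughout $|u|<R$. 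The permanence of functional equations enters only at the very end, to assert that the extended function still satisfies $f[\varphi(u/2),\varphi(u)]=0$.

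Two corrections are needed. First, your opening paragraph has the logic of existence backwards: the permanence principle is conditional --- \emph{if} the continuations along a path exist, \emph{then} they satisfy the continued equation --- so it cannot be used to ``continue $P$ over $|u|<2\rho$''; moreover $f[P(v),w]=0$ does not express $w=P(2v)$ as a function of $P(v)$, it only confines $w$ to finitely many roots, leaving unsettled which root, whether the selection is analytic, and whether it agrees with the original germ. In the paper no path-by-path continuation is performed at all; existence is an \emph{output} of the eliminated equation plus the structure theorem, exactly as in your second paragraph, so the first paragraph should simply be discarded rather than patched. Second, the ``main obstacle'' you flag is dispatched in one line once you notice that after elimination the only auxiliary quantity left is $P(u/2^{m})$, which is single-valued and holomorphic on the entire large disc (not, as you write, a meromorphic branch on the small one); hence one fixed polynomial with holomorphic coefficients covers the whole disc at once, no induction on branch numbers $N_k$ is required, and reducibility or spurious factors are handled by replacing the eliminated equation with an irreducible factor that $P(u)$ satisfies on $|u|<\rho$, which is the equation to which the structure theorem is applied.
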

 
 \begin{proof}By assumption, the element $P(u)$ converges in some domain\begin{equation}
\label{ }
|u|<\rho
\end{equation}and satisfies the equation\begin{equation}
\label{u2}
\boxed{f\left[P\left(\frac{u}{2}\right),P(u)\right]=0}
\end{equation}in that domain.  Moreover, we can assume that the polynomial $f$ in \eqref{initial} is irreducible.

Now, if, in \eqref{initial}, we replace $u$ by $\dfrac{u}{2}$, then in the result we again replace $u$ by $\dfrac{u}{2}$, and if we carry out this process an arbitrary number of times, say $m>0$, where $m$ is an arbitrary integer, we obtain\begin{eqnarray}
f\left[P\left(\frac{u}{2}\right),P(u)\right] & = & 0\ \ \ \text{for}\ \ |u|<\rho \\
f\left[P\left(\frac{u}{2^2}\right),P(\frac{u}{2})\right] & = & 0\ \ \ \text{for}\ \ |u|<2\rho \\
f\left[P\left(\frac{u}{2^3}\right),P(\frac{u}{2^2})\right] & = & 0\ \ \ \text{for}\ \ |u|<2^2\rho \\
\cdots&\cdots&\cdots\\
f\left[P\left(\frac{u}{2^m}\right),P(\frac{u}{2^{m-1}})\right] & = & 0\ \ \ \text{for}\ \ |u|<2^{m-1}\rho 
\end{eqnarray}For the moment we put\begin{equation}
\label{xs}
x:=P(u), \ \ x_1:=P\left(\frac{u}{2}\right), \ \ \cdots,\ \ x_m:=P\left(\frac{u}{2^{m}}\right)
\end{equation}
Then we can write the above equations in the simple form:\begin{equation}
 \label{array}
 f(x_1,x)=0, \ \ f(x_2,x_1)=0, \cdots, \ \ f(x_m, x_{m-1})=0
 \end{equation}Since the number of equations in \eqref{array} is $m$, we can eliminate the $m-1$ quantities\begin{equation}
\label{ }
x_1,\ \ x_2,\cdots,\ \ x_{m-1}
\end{equation}from them to obtain an equation\begin{equation}
\label{one}
\gamma(x_m,x)=0
\end{equation}where the left hand side is a polynomial in $x_m$ and $x$, and this equation is valid whenever those in \eqref{array} are.

Therefore, we have found that within$$|u|<\rho$$ the equation \eqref{one} is satisfied by \begin{equation}
\label{ }
x=P(u), \ \ x_m=P\left(\frac{u}{2^{m}}\right)
\end{equation}because \eqref{one} arises when the quantities $x_1,\cdots,x_m$ are defined by \eqref{xs}.

Now, \eqref{one} has the form\begin{equation}
\label{ }
\gamma_0(x_m)x^n+\gamma_1(x_m)x^{n-1}+\cdots+\gamma_n(x_m)=0
\end{equation}where the $\gamma_j(x_m)$ are polynomials in $x_m$.  Therefore, since\begin{equation}
\label{ }
\gamma_j(x_m)=\gamma_j\left[P\left(\frac{u}{2^{m}}\right)\right]=:p_j(u)
\end{equation}where $p_j(u)$ is an ordinary power series in $u$ which converges throughout the domain 
\begin{equation}
\label{bigdomain}
|u|<2^m\rho
\end{equation}we find that the equation\begin{equation}
\label{Algebroid}
p_0(u)z^n+p_1(u)z^{n-1}+\cdots+p_n(u)=0
\end{equation}is satisfied in the smaller domain $$|u|<\rho$$ by \begin{equation}
\label{Pu}
x=P(u).
\end{equation}But, the integer $m$ can be chosen as large as we wish, so it is always possible to select $m$ large enough to ensure that $$2^m\rho>R$$ for any fixed $R$.  Hence, \emph{the power series $p_i(u)$ converge throughout the domain}\begin{equation}
\label{R}
|u|\leq R
\end{equation}.

Now, if the equation \eqref{Algebroid} is reducible in the domain \eqref{R}, we can factor it into irreducible equations, and one of these factors will then be satisfied in $|u|<\rho$ by \eqref{Pu}.  Hence, we may use this latter equation instead of \eqref{Algebroid}, or what is the same thing, assume that \eqref{Algebroid} is \emph{\textbf{irreducible}} in the domain \eqref{bigdomain}.

By the fundamental theorem, \ref{FundThm}, it follows that equation \eqref{Algebroid} yields $n$ single-valued brandhes of an analytic function of $u$ (each branch single-valued in \eqref{R}), and these branches form a single sycle in the domain \eqref{bigdomain}.  Moreover, since equation \eqref{Algebroid} (as proven above) is satisfied by \eqref{Pu} in the smaller domain $|u|<\rho$, we conclude that in its domain of convergence, $P(u)$ is an element of one of these $n$ single-valued branches, and hence, an $n$-valued multiform function, $\varphi(u)$ arises within the domain \eqref{bigdomain} from the element $P(u)$.  This function therefore exists in the domain \eqref{R}, and arises from the equation \eqref{u2}, valid in the domain $$|u|<\rho.$$

Then by the permanence of the functional equation, the function $\varphi(u)$ satisfies equation \begin{equation}
\label{ }
f\left\{\varphi\left(\frac{u}{2}\right), \varphi(u)\right\}=0
\end{equation}everywhere in the finite plane.\end{proof}


\subsection{Statement  of the general problem}
 
 The general problem which stands at the beginning of the whole theory can be formulated as follows: (We follow Mittag-Leffler~\cite{Mittag})
 \\

 \emph{\textbf{PROBLEM:}}

\begin{quote}\emph{Within the domain of regularity of the analytic function $\varphi(u)$ there exist three points
\begin{equation}
\label{ }
u=a, \ \ v=b, \ \ u+v=a+b
\end{equation}such that if
\begin{equation}
\label{elements}
P_1(u|a), \ \ P_2(v|b), \ \ P_3(u+v|a+b)
\end{equation}denote the respective elements of 
\begin{equation}
\label{ }
\varphi(u)\, \ \ \varphi(v), \ \ \varphi(u+v)
\end{equation}in the corresponding domains of convergence of the form
\begin{equation}
\label{nb}
|u-a|<\rho_1, \ \ |v-b|<\rho_2, \ \ |u+v-a-b|<\rho_3
\end{equation}then an \textbf{\emph{algebraic equation}}
\begin{equation}
\label{CP}
G(x,y;z)=0
\end{equation}is satisfied for
\begin{equation}
\label{ }
x=P_1(u|a), \ \ y=P_2(v|b), \ \ z=P_3(u+v|a+b)
\end{equation}as long as the three power series converge.
}\end{quote}

\begin{quote}\textbf{\emph{Which elements have his characteristic property and to what functions to they belong?}}\end{quote}

The problem is solved by the following steps:\begin{enumerate}
  \item we prove that the function $\varphi(u)$ is algebroid in any \shout{bounded} domain.
    \item we prove that $\varphi(u)$ has a \shout{finite number of branches} in all the complex plane.
  \item we prove Theorem \ref{2}.
\end{enumerate}


\subsection{$\varphi(u)$ is algebroid in any bounded domain}

There are two different proofs available for this.  The first proof is by \textsc{Phragm\'en} and the second is due, in principle, to \textsc{Weierstrass}.  The latter's proof has the advantage that it is applicable to the more general case of the \shout{abelian} functions of several complex variables.


\subsubsection{\textsc{Phragm\'en}'s proof}

W suppose that the equation \begin{equation}
\label{ }
G[P_1(u|a),P_2(v|b), P(u+v|a+b)]=0
\end{equation}holds between three elemnts $P_1,P_2,P$ of the function $\varphi$.

We start with the element $P_1(u|a)$, say.  Then there exists a positive quantity, say $r_1$, such that the function defined by this element is algebroid within the domain $$|u-a|<r_1.$$This is most certainly true if we choose $r_1$ smaller than the radius of convergence of the element $P_1(u|a)$.  There exist analogous numbers $r_2$ and $r$ for the elements $P_2(v|b)$ and $ P(u+v|a+b)$, respectively.
\\

\shout{CLAIM}:

\begin{quote} \shout{The upper limits of the quantities $r_1$, $r_2$, and $r$ are all finite, or they are all infinite at the same time.}

\end{quote}
 
 \begin{proof}  Suppose that one of the upper limits be infinite, that is, suppose that the function $\varphi(u)$ is algebroid in every finite domain.  Then it is necessary that the others also have to be infinite, too.  
 
 Now we will show that all three cannot be finite.  For, if this be the case, let $\rho_1$, $\rho_2$, and $\rho$ be these finite limits.  Then one of the following two inequalities must hold:\begin{equation}
\label{ }
\rho_1<\rho+\rho_2 \ \ \ \ \ \text{or}\ \ \ \ \ \rho_2<\rho+\rho_1
\end{equation}Define\begin{equation}
\label{ }
v-b:=-\frac{\rho_2}{\rho+\rho_2}(u-a).
\end{equation}Then we obtain that \begin{equation}
\label{ }
u+v-a-b=\frac{\rho}{\rho+\rho_2}(u-a).
\end{equation}Suppose, now, that \begin{equation}
\label{ }
|u-a|<R<\rho+\rho_2.
\end{equation}Then we obtain\begin{equation}
\label{ }
|v-b|<\frac{R}{\rho+\rho_2}\rho_2<\rho_2
\end{equation}and \begin{equation}
\label{ }
|u+v-a-b|<\frac{R}{\rho+\rho_2}\rho<\rho.
\end{equation}This means that the functions of $u$ defined by the elements $P_2$ and $P$ after this substitution are algebroid in all of the domain \begin{equation}
\label{ }
|u-a|<R
\end{equation}where $R$ is a positive quantity smaller than $\rho+\rho_2$.  Let $y$ and $z$ denote these functions, and let $x$ be the function defined by the element $P_1$.  Then, by assumption, the following equation\begin{equation}
\label{ }
G(x,y,z)=0
\end{equation}holds.  If we eiminate $y$ and $z$ between this equation and the two equations\begin{eqnarray}
y^{\mu}+ \phi_1y^{\mu-1}+\cdots+\phi_{\mu}& = & 0 \\
z^{\nu}+ \psi_1y^{\nu-1}+\cdots+\psi_{\nu}& = & 0
\end{eqnarray}which define $y$ and $z$ as algebroid functions in the interior of the domain $|u-a|<R$, we obtain the resultant equation\begin{equation}
\label{ }
z^{n}+ f_1y^{n-1}+\cdots+f_{n}=  0
\end{equation}where $f_1,\cdots,f_n$ are meromorphic in the domain $|u-a|<R$.

This means that $x$ is algebroid in the interior of this same domain $|u-a|<R$ where $R$ is any positive quantity smaller than $\rho+\rho_2$.  But, if we take $R$ to satisfy $\rho_1<R<\rho+\rho_2$, we \shout{contradict the assumption of the maximality of} $\rho_1$.

The same reasoning applies to the case $\rho_2<\rho+\rho_1$.

 Therefore we have proven
 
 \begin{theorem}\begin{quote}.
 
 Suppose an analytic function $\varphi(u)$ admits an \shout{algebraic addition theorem}.  Then:\begin{enumerate}
  \item it exists in every finite domain $$|u|<R.$$
  \item it is \shout{algebroid everywhere} in that domain.
 \end{enumerate}
 \end{quote}\end{theorem}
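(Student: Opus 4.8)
The plan is to bootstrap from Phragm\'en's already-completed \textsc{Claim} via a covering argument. By that Claim, the three ``radii of algebroidicity'' $\rho_1,\rho_2,\rho$ attached to the elements $P_1(u|a),P_2(v|b),P(u+v|a+b)$ are either all finite or all infinite; suppose, toward the desired conclusion, that they are all \emph{finite}. Reproduce the very same linear-substitution-and-resultant mechanism that proved the Claim, but now run it in the ``balanced'' direction: pick the substitution $v-b=-\tfrac{\rho_2}{\rho+\rho_2}(u-a)$ so that both $P_2$ and $P$ become algebroid functions of $u$ throughout $|u-a|<R$ for any $R<\rho+\rho_2$, eliminate $y$ and $z$ from the AAT relation $G(x,y,z)=0$ together with the two defining algebroid equations for $y$ and $z$, and conclude that $x=P_1(u|a)$ itself satisfies a polynomial equation with coefficients meromorphic in $|u-a|<R$. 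Choosing $R$ with $\rho_1<R<\rho+\rho_2$ then contradicts the maximality of $\rho_1$; the symmetric choice handles the inequality $\rho_2<\rho+\rho_1$. Hence the $\rho_i$ cannot be finite, so by the Claim they are all infinite, which is exactly the assertion that $\varphi(u)$ exists and is algebroid in every finite disk $|u|<R$.

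More concretely, the steps I would carry out, in order, are: (1) invoke the Claim to get the finite/infinite dichotomy; (2) assume finiteness and name the suprema $\rho_1,\rho_2,\rho$; (3) observe the triangle-type inequality forces $\rho_1<\rho+\rho_2$ or $\rho_2<\rho+\rho_1$, and fix the first case WLOG; (4) perform the linear change of variable in $v$, verify the two image disks $|v-b|<\tfrac{R}{\rho+\rho_2}\rho_2$ and $|u+v-a-b|<\tfrac{R}{\rho+\rho_2}\rho$ stay inside the respective domains of algebroidicity; (5) write down the two monic algebroid equations satisfied by $y:=P_2$ and $z:=P$ over the field of meromorphic functions on $|u-a|<R$, with those coefficients holomorphic there since $R$ is strictly below the radii; (6) take the resultant with $G(x,y,z)=0$ to eliminate $y,z$ and obtain a nontrivial monic polynomial in $x$ with coefficients meromorphic on $|u-a|<R$ (nontriviality must be checked: the resultant is not identically zero because $G$ genuinely involves the variable attached to $x$, as it is the AAT polynomial); (7) conclude $x=P_1(u|a)$ is algebroid on $|u-a|<R$ for all $R<\rho+\rho_2$, contradicting $\rho_1=\sup$; (8) repeat symmetrically; (9) therefore the suprema are infinite, and translate ``algebroid on every bounded set containing $a$, $b$, $a+b$'' into the clean statement about $|u|<R$ by an obvious translation of coordinates.

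The main obstacle I expect is step (6), ensuring the resultant really is a \emph{nontrivial} polynomial equation for $x$, i.e.\ that elimination of $y$ and $z$ does not collapse $G(x,y,z)=0$ to $0=0$ or to a relation with vanishing leading coefficient. This needs the AAT polynomial $G$ to depend on the $x$-variable in an essential way (which one may assume after discarding spurious factors), and it needs the coefficients $f_1,\dots,f_n$ produced by the resultant to be genuinely meromorphic — not identically infinite — on $|u-a|<R$; this follows because the coefficients of the two algebroid equations for $y$ and $z$ are holomorphic on $|u-a|<R$ (as $R$ is strictly less than $\rho+\rho_2 \le \min$ of the relevant radii after the substitution), so the resultant's coefficients are polynomial combinations of holomorphic functions, hence holomorphic, and only the normalization to make the equation in $x$ monic introduces poles. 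A secondary, more bookkeeping-type obstacle is making precise ``the function $\varphi$ is algebroid in a domain'' in the presence of its branch structure — but here I am entitled to lean on the fundamental theorem~\ref{FundThm} and the earlier definition of an algebroid function, so this reduces to checking that the eliminant equation has the right shape and that its coefficients are holomorphic on the stated disk.
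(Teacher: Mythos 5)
Your proposal follows essentially the same route as the paper's own (Phragm\'en's) argument: the finite/infinite dichotomy for $\rho_1,\rho_2,\rho$, the linear substitution $v-b=-\tfrac{\rho_2}{\rho+\rho_2}(u-a)$ keeping $|v-b|<\rho_2$ and $|u+v-a-b|<\rho$ for $|u-a|<R<\rho+\rho_2$, elimination of $y,z$ via resultants to show $x=P_1(u|a)$ is algebroid on $|u-a|<R$, and the contradiction with the maximality of $\rho_1$ (plus the symmetric case). Your extra attention in step (6) to the nontriviality of the eliminant, which the paper passes over in silence, is a welcome refinement but does not change the method.
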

  \end{proof}
 
 \subsubsection{\textsc{Weierstrass}'s proof}

\textsc{Weierstrass}'s proof is based on his fundamental continuation lema (theorem 7).  However, in order to apply it, we must be sure we comply with the hypothesis that the function has an element which is holomorphic at the origin.

\begin{theorem}
\begin{quote}.

The analytic function $\varphi(u)$ has an algebraic addition theorem
\begin{equation}
\label{ }
G\{P_1(u-a, P_2(v-b), P_3(u+v-a-b)\}=0
\end{equation}if and only if the function, $\psi(x)$, defined by \begin{equation}
\label{ }
\psi(x):=\varphi(x+a)
\end{equation}has the properties:\begin{enumerate}
  \item $\psi(x)$ possesses a functional element which exists in a neighborhood of $x=0$, call it $\psi_0(x)$; and
  \item between \shout{its} functional values\begin{equation}
\label{ }
\psi_0(x), \ \ \psi_0(y), \ \ \psi_0(x+y)
\end{equation}where $y=v-b$ there exists an algebraic equation\begin{equation}
\label{ }
\bar{G}\{\psi_0(x),\psi_0(y),\psi_0(x+y)\}=0
\end{equation}with coefficients independent of $x$ and $y$.
   
\end{enumerate}

\end{quote}
\end{theorem}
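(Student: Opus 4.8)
This is a normalization lemma: it merely transports the algebraic addition theorem of $\varphi$ to the recentred translate $\psi(x)=\varphi(x+a)$, which by construction possesses a function element at the origin --- precisely the hypothesis under which Weierstrass's continuation lemma (Theorem~7) applies. Accordingly the plan is to prove both implications by an explicit change of variable together with one elimination step, invoking the principle of permanence of functional equations (Theorem~5) whenever an identity verified for particular function elements has to be carried along analytic continuations.

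\emph{Necessity.} Assume $\varphi$ admits the addition theorem. By the formulation of the general problem $\varphi$ is regular at $u=a$ (and at $u=b$), so it has a function element $P_1(u\mid a)$ there; writing $u=x+a$ turns this into a power series $\psi_0(x):=P_1(x+a\mid a)$ converging in a neighbourhood of $x=0$, which is property~(i). For property~(ii) I would first re-centre the addition theorem. Specializing the middle slot of $G\{\varphi(u),\varphi(v),\varphi(u+v)\}=0$ to $v=a$ gives $G\{\varphi(u),C,\psi(u)\}=0$ with the constant $C:=\varphi(a)$; hence $\varphi(u)$ is a root of the polynomial $W\mapsto G(W,C,\psi(u))$, i.e.\ $\varphi(u)$ is algebraic over $\psi(u)$ with coefficients free of $u$, and likewise $\varphi(v)$ over $\psi(v)$ --- this is the mechanism by which the regular points $a$ and $b$ let one replace a value of $\varphi$ shifted by $a$ (or by $b$) by an algebraic function, over a constant, of the unshifted value. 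Applying the addition theorem at the pair $(u,v)=(x+a,y)$ gives $G\{\psi_0(x),\varphi(y),\psi_0(x+y)\}=0$, and eliminating $\varphi(y)$ between this relation and $G\{\varphi(y),C,\psi_0(y)\}=0$ --- take the resultant in that variable --- produces a polynomial identity $\bar G\{\psi_0(x),\psi_0(y),\psi_0(x+y)\}=0$ whose coefficients do not depend on $x$ or $y$. This is property~(ii), with $y$ now playing the role of $v-b$; the point $b$ and the element $P_2$ enter only through this recentring bookkeeping.

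\emph{Sufficiency and the main difficulty.} Conversely, given (i) and (ii) for $\psi$, one undoes the translation through $\varphi(u)=\psi(u-a)$: relation~(ii) becomes an algebraic relation among values of $\varphi$ at points shifted by $a$ and by $b$, and exactly the same device --- use~(ii) with one argument at a base point to see that $\varphi(w)$ and $\varphi(w+a)$ are algebraically related over a constant, then eliminate the superfluous value --- recovers an equation $G\{\varphi(u),\varphi(v),\varphi(u+v)\}=0$, i.e.\ an algebraic addition theorem for $\varphi$ in the sense of the general problem. The content is purely formal; the genuine difficulty is bookkeeping: keeping track of which branch is meant in each slot, since $\varphi$ and $\psi$ are in general multiform (this is exactly where Theorem~5 is needed to pass the identities along continuations); checking that after each resultant the resulting relation is nontrivial and its coefficients stay independent of $u$ and $v$, which holds because the values $\varphi(a),\varphi(b),\varphi(2a),\dots$ serving as ``constants'' are fixed once a branch is fixed; and, above all, collapsing the three \emph{a priori} distinct elements that a naive translation would produce into the \emph{single} element $\psi_0$ demanded by~(ii). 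It is this last feature --- achieved by the elimination step --- that later permits one to set $y=x$ in~(ii), replace $x$ by $u/2$, and read off the relation $f[\psi_0(u/2),\psi_0(u)]=0$ that feeds into Theorem~7.
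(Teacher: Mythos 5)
Your overall plan (recentre at $a$, then eliminate down to a single element at the origin) is the same as the paper's, but the specific specializations you perform are not licensed by the hypothesis, and this is a genuine gap rather than bookkeeping. The hypothesis is the local, element-wise relation $G[P_1(u-a),P_2(v-b),P_3(u+v-a-b)]=0$, valid only for $u$ in a disk about $a$ and $v$ in a disk about $b$. Your first move sets $v=a$ in $G[\varphi(u),\varphi(v),\varphi(u+v)]=0$ to conclude that $\varphi(u)$ is algebraic over $\psi(u)=\varphi(u+a)$ with constant coefficients, and your second move applies the addition theorem at the pair $(u,v)=(x+a,y)$ with $y$ near $0$. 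Both evaluate the second slot at points (the point $a$, respectively a neighbourhood of $0$) where the relation has not been assumed to hold; worse, nothing in the hypothesis guarantees that $\varphi$ is even regular near $v=0$ --- manufacturing an element at the origin is precisely the purpose of passing to $\psi$. To justify these moves you would need the addition theorem to hold at arbitrary base points and for arbitrary branches, i.e.\ essentially \textsc{Phragm\'en}'s invariance theorem, which comes later in the paper and whose proof depends on the algebroid machinery that this normalization lemma is needed to set up; the permanence principle you invoke does not supply it, since one would have to know that suitable continuation paths exist inside the domain of regularity and control which branches they produce.

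The paper's (\textsc{K\"obe}'s) proof never leaves the three given disks: after the substitution $x=u-a$, $y=v-b$ it sets $x=0$ and then $y=0$ in $G[P_1(x),P_2(y),P_3(x+y)]=0$ (the legitimate specialization is $v=b$, i.e.\ $y=0$, not $v=a$), obtaining one-variable identities $G_1[P_2(y),P_3(y)]=0$ and $G_2[P_1(x),P_3(x)]=0$; it then eliminates $P_3(y)$ to get $G_3[P_1(y),P_2(y)]=0$, next $P_3(x+y)$ (using $G_2$ with $y$ replaced by $x+y$ against the recentred relation) to get $G_4[P_1(x),P_2(y),P_1(x+y)]=0$, and finally $P_2(y)$, arriving at $\bar G[P_1(x),P_1(y),P_1(x+y)]=0$ with $\psi_0=P_1$; the coefficients stay constant because only resultants of polynomials with constant coefficients are formed. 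If you replace your two unjustified specializations by the pair $x=0$, $y=0$ and reorganize your eliminations accordingly, your argument becomes the paper's. Your sufficiency direction has the same overreach (relating $\varphi(w)$ and $\varphi(w+a)$ over a constant again requires the $v=a$ specialization) and is more elaborate than necessary: the paper simply rewrites property (ii) in the variables $u=x+a$, $v=y+b$ and reads it off as the required addition theorem.
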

 
 \begin{proof} (\textsc{K\"obe~\cite{Koebe}})
 First, assume that $\varphi(u)$ admits an algebraic addition theorem.  Then we suppose that in the neighborhood of each of the respective points $$u=a,\ \ v=b,\ \ u+v=a+b$$the function $\varphi(u)$ is represented by the respective power series \begin{equation}
\label{ }
P_1(u-a), \ \ P_2(v-b), \ \ P_3(u+v-a-b)
\end{equation}and that there exists an algebraic relation\begin{equation}
\label{AT}
G\left[P_1(u-a), P_2(v-b), P_3(u+v-a-b)\right]=0
\end{equation}holding in the common domain of $P_1,P_2,P_3$.  Set\begin{equation}
\label{ }
x:=u-a, \ \ y:=v-b, \ \ x+y:=u+v-a-b.
\end{equation}Then \eqref{AT} becomes\begin{equation}
\label{AT1}
G\left[P_1(x), P_2(y), P_3(x+y)\right]=0
\end{equation}Since $\varphi(x+a)=\psi(x)$ is holomorphic at $u=a$, $P_1(x)$ is holomorphic at $x=0$, and this proves part 1.  

In \eqref{AT1} we put $x=0$ and then $y=0$, which gives us the pair of equations\begin{eqnarray}
G_1\left[P_2(y),P_3(y)\right] & = & 0 \\
G_2\left[P_1(x),P_3(x)\right] & = & 0 
\end{eqnarray}Since these are algebraic identities, we can replace $x$ by $y$ in the second equation to obtain\begin{equation}
\label{G2}
G_2\left[P_1(y),P_3(y)\right]  =  0.
\end{equation}Now we eliminate $P_3(y)$ from the first equation in the above pair as well as from 
\eqref{G2} to get \begin{equation}
\label{G3}
G_3\left[P_1(y),P_2(y)\right]=0.
\end{equation}In \eqref{G2} we replace $y$ by $x+y$ to get\begin{equation}
\label{G2+}
G_2\left[P_1(x+y),P_3(x+y)\right]  =  0
\end{equation}Eliminate $P_3(x+y)$ between \eqref{AT1} and \eqref{G2+} to get\begin{equation}
\label{G4}
G_4\left[P_1(x),P_2(y),P_1(x+y)\right]=0
\end{equation}Finally, eliminate $P_2(y)$ between \eqref{G4} and \eqref{G3} to get\begin{equation}
\label{ }
\bar{G}\left[P_1(x),P_1(y),P_1(x+y)\right]=0
\end{equation}holding in some neighborhood of the origin.  If we take\begin{equation}
\label{ }
\psi_0(x):=P_1(x)
\end{equation} for some $a$ close to $x=0$, we have proved property 2.

Now suppose that the function $\psi(x)$ is defined by the equation \begin{equation}
\label{ }
\psi(x):=\varphi(x+a)
\end{equation}and has the properties 1 and 2.  Then, since $\psi(x)$ is holomorphic at $x=0$, the function $\varphi(u)$ is holomorphic about $u=a$, and has the expansion $P_1(u-a)$ there, i.e., $$\psi_0(x)=P_1(u-a).$$Similarly, for a suitable $b$, we get   $$\psi_0(y)=P_1(v-b),$$ whence by property 2 , \begin{equation}
\label{ }
\bar{G}\{P_1(u-a),P_2(v-b),P_3(u+v-a-b)\}=0
\end{equation}i.e., $\varphi(u)$ admits an algebraic addition theorem.

\end{proof}

Now we can give \textsc{Weierstrass}' proof (as reproduced by Falk~\cite{Falk} from the winter semester of 1885) that $\varphi(u)$ is algebroid in every finite domain.

\begin{proof}By the previous theorem, we can assume that $\varphi(u)$ is holomorphic at $u=0$.  Suppose that $u$ and $v$ satisfy the conditions\begin{equation}
\label{ }
|u|<\frac{\rho}{2}, \ \ \ |v|<\frac{\rho}{2}
\end{equation}where $\rho$ is the radius of convergence of that power series $P(w)$ which represents the single-valued branch $\varphi(u)$ in the form\begin{equation}
\label{ }
\varphi(w)=P(w)
\end{equation}Then, by the previous theorem \begin{equation}
\label{ }
G\left[P(u),P(v),P(u+v)\right]=0
\end{equation}holds for \begin{equation}
\label{ }
|u|<\frac{\rho}{2}, \ \ \ |v|<\frac{\rho}{2}
\end{equation}Hence, if we put $u=v$, we obtain a result of the form \begin{equation}
\label{ }
f\left[P(u),P(2u)\right]=0\ \ \ \ \text{for}\ \ |u|<\frac{\rho}{2}
\end{equation}or \begin{equation}
\label{ }
f\left[P(\frac{u}{2}),P(u)\right]=0\ \ \ \ \text{for}\ \ |u|<\rho,
\end{equation}where $f(z,x)$ denotes a polynomial in $z$ and $x$.  The theorem now follows from \textsc{Weierstrass}' continuation lemma.\end{proof}


\subsection{\textsc{K\"obe}'s proof of Weierstrass' theorem}

\textsc{Paul K\"obe} was a German mathematician who is famous for his proof of the general uniformization theorem for algebraic functions.  He wrote his doctor's thesis under the direction of \textsc{H.A. Schwarz}, one of \textsc{Weierstrass}' most devoted students, and the man who replaced him in Berlin, when \textsc{Weierstrass} retired.  \textsc{K\"obe}'s thesis is entitled \emph{``\"Uber diejenigen analytischen Funktionn eins Arguments, welche ein algebraisches Additionstheorem besitzen"}~\cite{Koebe} contains a proof of the solution of the problem for multiform functions.  His fundamental step is:

\begin{theorem}
\begin{quote}.

Suppose the analytic function $\varphi(u)$ admits an algebraic addition theorem.  Then, the number of branches of the function $\varphi(u)$ is \shout{finite}, and every branch is an \shout{algebraic function} of every other branch.
\end{quote}
\end{theorem}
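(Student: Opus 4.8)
The plan is to combine the structure theorem for algebroid functions (Theorem~\ref{FundThm}) with the algebraic addition theorem along the lines of the Phragmén-type argument already used above, but now with $R=\infty$. By the previous two subsections we may assume $\varphi(u)$ is holomorphic at $u=0$, satisfies
\[
f\Bigl[\varphi\Bigl(\tfrac{u}{2}\Bigr),\varphi(u)\Bigr]=0
\]
in the whole finite plane, and (by \textsc{Weierstrass}' continuation lemma) is algebroid in every disk $|u|<R$. So for each such $R$ there is an irreducible
\[
p_0^{(R)}(u)\,z^n_{R}+p_1^{(R)}(u)\,z^{n_R-1}+\cdots+p_{n_R}^{(R)}(u)=0,
\]
with holomorphic coefficients, whose complete analytic function contains the element $P(u)$ of $\varphi$ at the origin. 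The first key step is to show that the degree $n_R$ is \emph{bounded independently of $R$}. To see this I would again exploit the equation $G[\varphi(u),\varphi(v),\varphi(u+v)]=0$ valid for $|u|,|v|$ small: fixing a generic small $v=v_0$ and regarding $G$ as a polynomial in the third slot of degree $m$ in $\varphi(u+v_0)$, the element $P(u+v_0)$ of $\varphi$ satisfies, over the field of power series in $u$, an algebraic equation of degree $\le m$. Since $\varphi(u+v_0)$ and $\varphi(u)$ are related by the same translation structure, the number of branches of $\varphi$ through a generic point is at most $m$; hence $n_R\le m$ for every $R$.

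The second step is to pass to the limit. Once $n_R$ is uniformly bounded, choose $n:=\max_R n_R$; then for a cofinal family of $R$'s the defining polynomial has the \emph{same} degree $n$, and the coefficient power series $p_j^{(R)}(u)$ for larger $R$ restrict to those for smaller $R$ (both arise from eliminating the same intermediate quantities $x_1,\dots,x_{m-1}$ in the proof of the continuation lemma, just with more iterations). Therefore the $p_j(u)$ are honest entire functions of $u$, and
\[
p_0(u)\,z^n+p_1(u)\,z^{n-1}+\cdots+p_n(u)=0
\]
holds, with entire coefficients, on all of $\mathbb{C}$, with $P(u)$ as an element of a branch. So $\varphi(u)$ has exactly $n<\infty$ branches globally. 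Finally, that every branch is an algebraic function of every other: fix two branches $\varphi_i,\varphi_j$. Apply the addition theorem with $v$ running over a full set of translates realizing all branches at a generic point, eliminate to produce a polynomial relation $H(\varphi_i(u),\varphi_j(u))=0$ with coefficients rational (indeed polynomial) in $u$; then use that the $p_k(u)/p_0(u)$, being symmetric rational functions of the finitely many branches, are themselves values of meromorphic functions, so $u$ itself can be eliminated, leaving a purely algebraic relation between $\varphi_i$ and $\varphi_j$.

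The main obstacle will be the \emph{uniform bound on the number of branches}: it is tempting but not immediate that the degree $m$ of $G$ in its last variable literally bounds $n_R$, because the element obtained by elimination in the continuation lemma could a priori be reducible in a way that inflates degree before one re-reduces. The careful point is that after reducing to an irreducible polynomial, its degree is exactly the number of branches of the complete analytic function containing $P(u)$, and \emph{that} number is monotone in $R$ and bounded by $m$ via the addition theorem at a generic pair $(u,v_0)$; so one must argue that enlarging $R$ never creates new branches through the origin, only extends the existing ones. I would handle this by invoking the permanence of the functional equation: any branch over $|u|<R'$ restricts to a branch over $|u|<R$ for $R<R'$, so $n_R$ is nondecreasing and bounded above, hence eventually constant, which is exactly what the limiting argument needs.
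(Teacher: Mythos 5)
Your proposal correctly isolates the crux --- a bound on the number of branches that is uniform in $R$ --- but the argument you offer for that bound is exactly the missing step, not a proof of it. You assert that ``fixing a generic small $v=v_0$ \dots the number of branches of $\varphi$ through a generic point is at most $m$'' and later that $n_R$ is ``nondecreasing and bounded above'' by the addition theorem. The difficulty is that the relation $G[\varphi_0(u),\varphi_0(v_0),\varphi_0(u+v_0)]=0$ is initially valid only for the single holomorphic element near the origin; when you continue $\varphi$ along an arbitrary path to a distant point $u'$, the permanence of the functional equation continues \emph{all three} arguments, so a priori the values occupying the first two slots vary with the path, and each branch at $u'$ is only known to be a root of \emph{some} continued equation --- which gives no bound of the form $h_n\le \deg_W G$. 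The paper's (K\"obe's) proof closes precisely this hole: inside each circle $K_n$ one chooses two mutually perpendicular diameters through the origin avoiding the finitely many singular points, writes $u'=u_1'+u_2'$ with $u_1',u_2'$ on these diameters, and observes that the continuations $\varphi_0(u_1'),\varphi_0(u_2')$ along the diameters are \emph{unique} (path-independent). Then permanence gives $G[\varphi_0(u_1'),\varphi_0(u_2'),\varphi_0(u')]=0$ for every branch value $\varphi_0(u')$, with the first two entries fixed numbers, so $h_n\le g$ where $g$ is the degree of $G$ in its last argument; monotonicity plus this bound gives eventual constancy. Without some substitute for this path-independence device, your bound $n_R\le m$ is unsupported.

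Two secondary points. First, your attempt to glue the algebroid equations over increasing discs into one equation with entire coefficients (claiming the $p_j^{(R)}$ ``restrict'' to each other) is not justified --- the eliminants for different numbers of iterations are genuinely different polynomials --- and it is also unnecessary: the theorem only asks for finitely many branches, which follows once $h_n$ stabilizes. Second, for the statement that every branch is an algebraic function of every other, the paper gets a relation with \emph{constant} coefficients directly, by noting that any two branches $\varphi_1,\varphi_2$ in $K_n$ satisfy $H_n[\varphi_0(u/2^n),\varphi_i(u)]=0$ with the same single-valued quantity $\varphi_0(u/2^n)$, which one eliminates to obtain $A[\varphi_1(u),\varphi_2(u)]=0$; your route (producing coefficients polynomial in $u$ and then eliminating $u$ using symmetric functions of the branches) is sketchier and would need the finiteness of the branch set already in hand, so it cannot be used upstream of the main bound.
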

 
 \begin{proof}

We know that $\varphi(u)$ has an element, $\varphi_0(u)$, which is holomorphic in the neighyborhood of the point $u=0$, and whose function values $\varphi_0(u)$, $\varphi_0(v)$ and $\varphi_0(u+v)$ satisfy an algebraic equation with coefficients independent of $u$ and $v$:\begin{equation}
\label{aat0}
G\left[\varphi_0(u),\varphi_0(v),\varphi_0(u+v)\right]=0
\end{equation}Let $r$ be the radius of convergence of the power series in $u$, which represents the holomorphic element $\varphi_0(u)$.  Following the proof of \textsc{Weierstrass}' continuation lemma, we put $u=v$ in \eqref{aat0} and obtain the algebraic equation \begin{equation}
\label{ }
G\left[\varphi_0(u),\varphi_0(u)\varphi_0(2u)\right]\equiv H_1\left[\varphi_0(u),\varphi_0(2u)\right]=0
\end{equation}.

  If $G\left[\varphi_0(u),\varphi_0(v),\varphi_0(u+v)\right]$ is an irreducible plynomial in the quantities $\varphi_0(u)$, $\varphi_0(v)$ and $\varphi_0(u+v)$, which may always be assumed, then it \shout{cannot} happen that all of the coefficients of the polynomial $H_1\left[\varphi_0(u),\varphi_0(2u)\right]$ would vanish.  Otherwise the polynomial $G$ would be \shout{divisible} by $\varphi_0(u)-\varphi_0(v)$.
  
  Then, as in the proof of the \textsc{Weierstrass} lemma, there exists an algebraic equation with constant coefficients between $\varphi_0(u)$, and $\varphi_0(\frac{u}{2^n})$:\begin{equation}
\label{ }
H_n\left[\varphi_0\left(\frac{u}{2^n}\right),\varphi_0(u)\right]=0
\end{equation}which holds for $|u|<r$.  

In this equation, $\varphi_0(\frac{u}{2^n})$ is a meromorphic function in the domain \begin{equation}
\label{Hn}
|u|<2^nr
\end{equation}Therefore, equation \eqref{Hn} shows that starting with the holomorphic element , $\varphi_0(u)$, of $\varphi(u)$, it can be analytically continued throughout the interior of a circle, $K_n$, centered at the origin, with radius $2^nr.$

By means of this continuation the function $\varphi(u)$ will  have only a finite number of branches within $K_n$ and the function will be algebroid at each point $u_0$ of the circle $K_n$. 

If $\varphi_1(u)$ and $\varphi_2(u)$ denote any two of these branches, then they satisfy the equations \begin{equation}
\label{ }
H_n\left[\varphi_0\left(\frac{u}{2^n}\right),\varphi_1(u)\right]=0, \ \ \ H_n\left[\varphi_0\left(\frac{u}{2^n}\right),\varphi_2(u)\right]=0
\end{equation}from which the uniquely determined quantity $\varphi_0\left(\frac{u}{2^n}\right)$may be eliminated.  The result of this elimination is an algebraic equation\begin{equation}
\label{ }
A\left[\varphi_1(u), \varphi_2(u)\right]=0
\end{equation}i.e., \shout{each branch is an algebraic function of the other.}

Since the number of singular points of the function $\varphi(u)$ within the circle $K_n$ is finite, there exist infinitely many pairs of mutually perpendicular diameters of this circle which do not go through any singular points (different from $u=0$).  We choose a definite one of these pairs.  Let the points of one of these diameters geometrically represent the values of the quantity $u_1$ and the points of the other diameter the values of the quantity $u_2$.  With this definition of $u_1$ and $u_2$, every quantity $u$ whose absolute value is smaller than $2^nr$ can be uniquely represented in the form\begin{equation}
\label{ }
u=u_1+u_2.
\end{equation}

For all values of $u$, whose absolute value is smaller than $r$, there is a uniquely determined branch of the function $\varphi(u)$, namely $\varphi_0(u)$.  The analytic continuations of this branch along the diameters within the circle $K_n$ are unique because, by assumption, neither diameter goes through a singular point.   We will denote these uniquely determined continuations of the branch $\varphi_0(u)$ by $\varphi_0(u_1)$ and $\varphi_0(u_2)$.

Now, let $u$ describe \shout{an arbitrary curve} $L$ from the origin to an interior point $u'=u'_1+u'_2$ of the circle $K_n$ which lies entirely within the given circle and which goes through no singular point.  As long as the absolute value of $u$ does not exceed, $r$, according to the fundamental assumption, the equation\begin{equation}
\label{ }
G\left[\varphi_0(u_1),\varphi_0(u_2),\varphi_0(u)\right]=0
\end{equation}By the permanence of the functional equation, this equation remains true for all analytic continuations, and therefore, if $\varphi_0(u)$ transforms into $\varphi_0(u')$ while being continued along the path $L$, whence $\varphi_0(u_1)$ and $\varphi_0(u_2)$ transform, respectively into $\varphi_0(u'_1)$ and $\varphi_0(u'_2)$, the following equation holds\begin{equation}
\label{ }
G\left[\varphi_0(u'_1),\varphi_0(u'_2),\varphi_0(u')\right]=0.
\end{equation}

Now comes the fundamental observation: in this equation $\varphi_0(u'_1)$ and $\varphi_0(u'_2)$
\shout{are independent} of the particular path joining the origin to $u'$. Therefore, the number $h_n$ of branches which can result from analytic continuation of the function $\phi(u)$ within the circle $K_n$ cannot exceed the degree, $g$, which is independent of $n$, of the fundamental equation \begin{equation*}
\label{ }
G[\phi_0(u),\phi_0(v),\phi_0(u+v)]=0
\end{equation*}with respect to 
$\phi_0(u+v)$.  This observation holds for every $n$ however large.  Therefore, the  positive numbers $h_1, h_2, h_3, \cdots$ satisfy the inequalities\begin{equation*}
\label{ }
h_1\leq h_2\leq \cdots \leq h_n\leq h_{n+1}\leq \cdots \leq g
\end{equation*}from which we may conclude that there exists a positive whole number $N$ with the property that  \begin{equation*}
\label{ }
h_N=h_{N+1}=h_{N+2}=\cdots \text{to infinity}
\end{equation*}that is, the set of all the $h_N$ branches of the function $\phi(u)$ obtained from the element $\phi_0(u)$ by analytic continuation in the interior of the circle $K_N$ represents the totality of all branches of the function $\phi(u)$.  For the transition to further circles of greater radius the analytic continuation of the function $\phi(u)$ results in a simultaneous continuation of the $h_N$ branches which transform into \emph{each other} when they circulate around any newly occurring branch point.

It was shown earlier that any two branches of the analytically continued function $\phi(u)$ within the circle $K_n$ are connected by an algebraic equation with constant coefficients.  We can now assert that this theorem holds everywhere for the function $\phi(u)$.  The proof results immediately if we replace $n$ by $N$.

The previous considerations have proven the following: 

\emph{The domain of the argument of the function $\phi(u)$ can be extended to all finite values without the function ceasing to be algebroid.  The number of branches of the function $\phi(u)$ is finite, and every branch is an algebraic function of every other branch. }
\end{proof}

Now we complete Koebe's proof of Weierstrass' theorem.

\begin{proof} [Proof of Weierstrass' theorem]\

Among the elementary symmetric functions of all of the branches of the function $\phi(u)$ there exist at least \emph{one} function $\psi(u)$ which is not a constant.  The function $\psi(u)$ is a single-valued function of the argument $u$ and thus  a \emph{meromorphic} function for all finite values of $u$.  For $|u|<r$, $\psi(u)$ is an algebraic function of $\phi_0(u)$ since all branches of the function $\phi(u)$ are algbraic functions of the branch $\phi_0(u)$ for $|u|<r$.  Therefore, the single-valued function $\psi(u)$ also has an \emph{algebraic addition theorem}, and the \emph{function $\phi(u)$} can be considered \emph{as an algbraic function of the single-valued function $\psi(u)$.} 

This completes \textsc{Koebe}'s proof of \textsc{Weierstrass}' theorem.
\end{proof}


 
 \subsection{\textsc{Phragmen}'s proof of Weierstrass' theorem}
 
 \textsc{Phragmen}~\cite{Phragmen} was the first mathematician to publish a proof of Weierstrass' theorem.  He gives a direct proof of the following theorem, which, in turn gives as an immediate corollary that  the number of branches of $\phi(u)$ is finite.

 \begin{theorem}
 \begin{quote}.
 
  We always have the relation \begin{equation*}
\label{ }
G\left[\phi(u),\phi(v),\phi(u+v)\right]=0
\end{equation*} between three values of the function $\phi$ corresponding to the values $u,v,u+v$ of the argument, i.e.,\textbf{ the same polynomial} connects any three values of $\phi(u),\phi(v),\phi(u+v)$ regardless of what branches we substitute.
\end{quote}
\end{theorem}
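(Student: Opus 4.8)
The plan is to leverage the structural results already established: by the previous theorems, $\varphi(u)$ exists and is algebroid in every bounded domain, it has finitely many branches $\varphi_1,\dots,\varphi_h$ in the whole finite plane, and any two branches are algebraically related. The key idea is that the algebraic addition relation, once verified for a single distinguished triple of branches $(\varphi_0(u),\varphi_0(v),\varphi_0(u+v))$ near the origin, must propagate to \emph{every} choice of branches by analytic continuation, but with a twist: continuing $u$ and $v$ independently along arbitrary paths can carry $\varphi_0(u)$ into any branch $\varphi_i$, $\varphi_0(v)$ into any branch $\varphi_j$, and correspondingly $\varphi_0(u+v)$ into some branch $\varphi_k$. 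So the real content is: (a) the same polynomial $G$ works throughout, and (b) the branch index $k$ of the sum is \emph{determined} — the relation $G[\varphi_i(u),\varphi_j(v),\varphi_k(u+v)]=0$ holds for \emph{all} branches simultaneously, i.e. for each $(i,j)$ and \emph{each} branch $\varphi_k$ of $\varphi(u+v)$.

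First I would fix the holomorphic element $\varphi_0$ near $u=0$ and the fundamental relation $G[\varphi_0(u),\varphi_0(v),\varphi_0(u+v)]=0$ valid for $|u|,|v|$ small. Then, working in a large disk $K_N$ where (per K\"obe's theorem) all $h$ branches already appear, I would take arbitrary points $u',v'$ with $u'+v'$ also in $K_N$, avoiding branch points, and arbitrary paths $L_u$ from $0$ to $u'$ and $L_v$ from $0$ to $v'$. Analytically continuing the identity along these paths — invoking the permanence-of-functional-equations theorem — yields $G[\varphi_0(u'){\upharpoonright}_{L_u},\varphi_0(v'){\upharpoonright}_{L_v},\varphi_0(u'+v'){\upharpoonright}_{L_u+L_v}]=0$. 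As the endpoints of the continuations range over all branches (which they do, since the branches form a single system connected through the branch points and the paths are arbitrary), we obtain that for every pair $(i,j)$ there is at least one $k$ with $G[\varphi_i(u'),\varphi_j(v'),\varphi_k(u'+v')]=0$.

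To upgrade ``at least one $k$'' to ``the same $G$ connects \emph{any} three values regardless of branch,'' I would argue as follows. Consider the polynomial $G(x,y;Z)$ as a polynomial in $Z$ of degree $g$ (its degree in the sum-argument). For generic fixed $(i,j)$ and varying $u+v=w$, the relation $G[\varphi_i(u),\varphi_j(v),Z]=0$ has $\varphi_k(w)$ among its roots; but the $h$ branches $\varphi_k(w)$ must all occur as $k$ ranges over continuations, and since $h\le g$ (already shown) and the branches are distinct as analytic functions, all $h$ of them are roots of the single degree-$g$ polynomial $G[\varphi_i(u),\varphi_j(v),Z]$ in $Z$ — which forces each to satisfy it. Running $i,j$ over all values gives the full statement: the one polynomial $G$ annihilates every triple $(\varphi_i(u),\varphi_j(v),\varphi_k(u+v))$.

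The main obstacle I anticipate is the bookkeeping of \emph{which} branch of $\varphi(u+v)$ one lands on after continuing along $L_u+L_v$, and proving that \emph{every} branch $\varphi_k(w)$ is reached this way — i.e., that the monodromy of the sum-argument is ``saturated'' by the independent monodromies of $u$ and $v$. This is where K\"obe's ``fundamental observation'' (that $\varphi_0(u'_1)$ and $\varphi_0(u'_2)$ are path-independent once the diameters are fixed) and the connectedness of the branch system do the heavy lifting; the delicate point is ensuring no branch of the sum is ``missed,'' which ultimately rests on the inequality $h\le g$ together with the fact that distinct branches cannot coincide on a set with a limit point. Once that combinatorial/monodromy fact is pinned down, the algebraic conclusion — all $h$ roots lie in a degree-$g$ polynomial, hence each satisfies it — is immediate.
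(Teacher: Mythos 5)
There is a genuine gap at the decisive step. Your first stage (continue the identity along $L_u$ and $L_v$ and invoke permanence of the functional equation) only produces \emph{correlated} triples: each choice of paths gives some triple $(i,j,k)$ with $G[\varphi_i(u'),\varphi_j(v'),\varphi_k(u'+v')]=0$, where the branch index $k$ of the sum is dictated by the paths. Your second stage is meant to decouple $k$ from $(i,j)$, but the argument offered is circular: the assertion that, for a \emph{fixed} pair $(i,j)$, all $h$ branches $\varphi_k(w)$ ``occur as $k$ ranges over continuations'' and hence are roots of $G[\varphi_i(u),\varphi_j(v),Z]$ is exactly the statement to be proved, and neither the inequality $h\le g$ nor the fact that distinct branches cannot agree on a set with a limit point yields it. A loop that changes the branch of $\varphi(u+v)$ may simultaneously change the branch of $\varphi(u)$ or of $\varphi(v)$; you yourself flag this ``saturation'' of the monodromy of the sum-argument as the main obstacle, and then assume it rather than prove it. (Being a root of $G[\varphi_i(u),\varphi_j(v),Z]$ \emph{is} the desired conclusion, so the counting adds nothing once the saturation claim is granted.)

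The paper (Phragm\'en's proof) closes precisely this gap by a translation trick absent from your proposal. To show that an arbitrary value $\phi_a$ of $\phi$ at $a$ satisfies $G[\phi_a,\,P_2(v|b),\,P(u+v|a+b)]=0$, he replaces the closed path at $a$ which produces $\phi_a$ by a path $aa'$, a finite collection of loops based at a nearby point $a'$ around the finitely many singular points of the (algebroid) function in a suitably chosen domain, and the return path $a'a$; and he shifts $v$ to a nearby generic point $b'$ chosen so that the translates by $b'$ of those singular points are \emph{regular} points. Then, while $u$ describes the loops and realizes the arbitrary monodromy of $\phi(u)$, the point $u+b'$ describes loops enclosing no singular point, so the elements $P_2(v|b)$ and $P(u+v|a+b)$ \emph{return to themselves}. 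Permanence of the functional equation then gives the relation with the first slot carrying an arbitrary branch and the other two slots fixed; applying the same argument to each slot in turn, with $a$ and $b$ arbitrary, gives the full theorem. Note also the difference in logical order: in the paper the finiteness of the number of values at a point (bounded by the degree of $G$ in $\phi(u+v)$) is a \emph{corollary} of this theorem, whereas your argument takes K\"obe's finiteness and $h\le g$ as inputs and still does not reach the conclusion. To repair your proof you would need to supply exactly this independent control of one argument's monodromy while the other two elements are held fixed.
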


\begin{proof}

We will prove this by showing that each value $\phi_{a}$, which the function $\phi(u)$ can take at the point  $a$, which $\phi(u)$ approaches indefinitely when  $u$ approaches $a$ indefinitely along a certain path, satisfies the equation\begin{equation*}\label{AAT}
G\left[\phi_a,\\ \mathrm{P_2(v|b)}, \\ \\ \mathrm{P(u+v|a+b)}\right]=0.
\end{equation*}

In fact, one arrives at the vaue $\phi_a$ by continuing the element $\mathrm{P_1(u|a)}$ along a certain closed path.  One may always chose a continuous, finite and simply connected domain which contains this path completely.  One may then determine a second continuous and finite domain such that the lower limit of the distances of an interior point of the first domain and a point in the exterior of the second are larger than a quantity $d>|b|$.
We imagine, for a moment, a third domain large enough to contain in its interior not only the last of the two domains we have just fixed, and its boundary, but also the path along which it is necessary to continue the element $\mathrm{P_1(u|a)}$ in order to arrive at the element $\mathrm{P(u+v|a+b)}$.  In the interior of this domain, the function defined by the element $\mathrm{P_1(u|a)}$--or, which gives the same function, by $\mathrm{P(u+v|a+b)}$--is algebroid, and therefore it has a finite number of singular points in the interior of the larger of the first two domains.  We may therefore choose two points, $a'$, $b'$ in the neighborhood of $a$ and $b$ such that they satisfy the following conditions: in the first place it is necessary that we have $|b'|<d$,   in order that to each point $\alpha$ in the interior of the smaller of the two domains corresponds a point $\alpha+b'$ in the interior of the larger.  Moreover, the point $a'$ will be chosen in the interior of the small domain in such a way that the points $a'$, $a'+b'$ do not coincide with any of th singular points, of which there are a finite number in the large domain which constitute the singular points of the function considered up to now.  Finally, if $\alpha$ is a singular point of this function situated in the interior of the small domain, $\alpha+b'$ will be a regular point of the same function.

Without changing the final value $\phi_a$, we may now replace the closed path $a\cdots a$ by a set of regular paths as follows: $1^{\text{o}}$ a path $aa'$, $2^{\text{o}}$ a number of \emph{loops}\footnote{By a \emph{loop} joining the point $a'$ to a singular point $a$, we mean a regular path composed of a path leaving $a'$ and arriving at the neighborhood of $a$, a small circle around $a$, and the first path described in the opposite direction.  For exampe, see ~\cite{Forsyth}} joining the point $a'$ to the singular points situated in the smaller domain, and $3^{\text{o}}$ the path $a'a$.  We may choose these loops in such a way that the corresponding loops which the point $u+b'$ describes are composed of regular paths and of circles which do not contain any singular point.  Therefore, if one continues the system of three elements $\mathrm{P_1(u|a)},\mathrm{P_2(v|b)},  \mathrm{P(u+v|a+b)}$ by varying $1^{\text{o}}$ $u$ from $a$ to $a'$ and $v$ from $b$ to $b'$, $2^{\text{o}}$ $u$ from $a'$ to $a'$ along the loops, and $3^{\text{o}}$ $u$ from $a'$ to $a$ and $v$ from $b'$ to $b$, by starting with the element $\mathrm{P_1(u|a)}$ one approaches indefinitely the value $\phi_a$, while the elements $\mathrm{P_2(v|b)}$ and $\mathrm{P(u+v|a+b)}$ \emph{return to themselves}.

But, it has therefore been demonstrated that every regular or singular element $\phi(u|a)$ of the function $\phi(u)$ in the neighborhood of the point $a$ necessarily satisfies \emph{the same equation}\begin{equation*}\label{AAT}
G\left[\phi(u|a),\mathrm{P_2(v|b)},  \mathrm{P(u+v|a+b)}\right]=0.
\end{equation*} Since the analogous result holds for the elements $\mathrm{P_2}$ and $\mathrm{P}$, and since moreover the points $a,b$ were taken totally arbitrarily, our assertion is completely justified.

From this fact that the relation\begin{equation*}
\label{ }
G\left[\phi(u),\phi(v),\phi(u+v)\right]=0
\end{equation*}always holds it follows that the function $\phi(u)$ \emph{has only a finite number of values at any point $$\phi_1(u),\phi_2(u), \cdots, \phi_n(u)$$ which does not exceed the degree of the polynomial $G$ in the variable $\phi(u+v)$.
}
And now the proof proceeds as in the proof of \textsc{Koebe}. 
 
\end{proof}
 
  \subsection{\textsc{Schwarz}'s proof of Weierstrass' theorem}
  
  In the final chapter of his book on elliptic functions, \textsc{Harris Hancock}~\cite{Hancock} exposits a proof which he attributes to \textsc{H.A. Schwarz}.  It is different from the proofs of \textsc{Koebe} and \textsc{Phragmen} in that it avoids the necessity of proving that the number of branches is finite nor does it appeal to the theory of symetric functions of the roots.  
  
But, just like the two previous proofs, it appeals to the fact that \emph{the singularities of $\phi(u)$ form a finite discrete set of points in any bounded set in the plane.}

First \textsc{Schwarz} proves a very interesting algorithmic lemma.
\begin{lemma}
\begin{quote}.

There exists a sequence of rational algebraic operations that transforms the original addition theorem polynomial equation into another polynomial equation whose root is a given branch of $\phi(u+v)$ and whose coefficients $P_r(u,v)$ all satisfy the equation 
$$P_r(u+k,v-k)\equiv P_r(u,v)$$ for arbitrary small $k$.  

\end{quote}
\end{lemma}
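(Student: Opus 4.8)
The plan is to view the relation $G[\phi(u),\phi(v),\phi(u+v)]=0$ as a polynomial equation $G(W;u,v)=0$ in the single unknown $W=\phi(u+v)$, where $G(W;u,v):=G[\phi(u),\phi(v),W]$ has coefficients in the field $\mathcal{K}$ of meromorphic functions of $(u,v)$ on a small polydisc about a generic base point (chosen so that the branches in play are single-valued there), and then to peel off from $G$ a distinguished factor immune to the substitution $(u,v)\mapsto(u+k,v-k)$. The elementary remark that drives everything is that $w:=\phi(u+v)$ is \emph{unchanged} by that substitution, so that, since $(u+k)+(v-k)=u+v$, the addition theorem applied with arguments $u+k$ and $v-k$ gives $G(w;u+k,v-k)=0$ for all small $k$ as well. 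Introduce $L:=\partial_u-\partial_v$ and extend it to $\mathcal{K}[W]$ by $LW=0$, so that $L$ differentiates along precisely the direction realised by $k\mapsto(u+k,v-k)$. Since $Lw=\phi'(u+v)-\phi'(u+v)=0$, differentiating the identity $G(w;u,v)=0$ yields $(LG)(w;u,v)=0$, and iterating gives $(L^{i}G)(w;u,v)=0$ for every $i\ge 0$; thus $w$ is a common root of $G,\,LG,\,L^{2}G,\dots$.

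Next I would form the ideal $I:=(G,LG,L^{2}G,\dots)\subseteq\mathcal{K}[W]$. Its members have $W$-degree at most $\deg_W G$, so $I$ is finitely generated, hence principal: $I=(D)$ with $D=W^{d}+P_{1}W^{d-1}+\cdots+P_{d}$ taken monic. Three facts are to be checked. (i) $w$ is a root of $D$ and $d\ge 1$: writing $D=\sum_i a_i\,L^{i}G$ with $a_i\in\mathcal{K}[W]$ (B\'ezout in the principal ideal domain $\mathcal{K}[W]$) and evaluating at $W=w$ gives $D(w;u,v)=0$; so $D$ is neither zero (as $D\mid G\ne 0$) nor a unit, whence $d\ge 1$. (ii) $I$ is stable under $L$: the derivation $L$ carries the generator $L^{i}G$ to the generator $L^{i+1}G$, so $L\big(\sum_j p_j\,L^{j}G\big)=\sum_j(Lp_j)\,L^{j}G+\sum_j p_j\,L^{j+1}G\in I$. (iii) $LD=0$: by (ii), $LD\in I=(D)$, so $D\mid LD$; but $D$ is monic in $W$, so $LD=LP_{1}\,W^{d-1}+\cdots+LP_{d}$ has $W$-degree strictly below $d$, and the only multiple of $D$ of smaller degree is $0$. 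Hence $LP_{r}=0$ for every $r$, which is exactly $P_{r}(u+k,v-k)\equiv P_{r}(u,v)$ for all small $k$. Since $D$ is obtained from $G$ by applying $L$ finitely often and then running the Euclidean algorithm in $\mathcal{K}[W]$ (with a final division by a leading coefficient), it is produced by a finite sequence of rational operations on the coefficients, as asserted.

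If one prefers to avoid differentiation, the operator $L$ can be replaced by the substitutions $(u,v)\mapsto(u+k_i,v-k_i)$ for suitably many constants $k_i$, taking $D=\gcd_W\big(G(W;u+k_1,v-k_1),\dots,G(W;u+k_N,v-k_N)\big)$; for $N$ large this stabilises to the greatest common divisor of the whole family $\{G(W;u+k,v-k)\}_k$, which is shift-invariant and still has $w$ as a common root --- but making ``the whole family'' precise then requires analytic continuation in the parameter $k$. The main obstacle, however one organises the argument, is precisely (iii): proving that the extracted factor has coefficients depending only on $u+v$ is the heart of the matter, and the $L$-stability of $I$ together with the monic-degree drop is what delivers it. One must also take some care, in reaching ``a given branch'' of $\phi(u+v)$, to continue all three function elements consistently and to invoke the permanence of functional equations, so that the relation $D(\phi(u+v);u,v)=0$ persists for the branch actually in hand.
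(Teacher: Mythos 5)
Your argument is correct in substance but follows a genuinely different route from the paper's. The paper (following Schwarz, as reported by Hancock) works with finite shifts: it arranges $G=0$ in powers of $\phi(u+v)$, substitutes $u+k_1$ for $u$ and $v-k_1$ for $v$ (so that $\phi(u+v)$ remains a root), takes the greatest common divisor of the shifted and unshifted polynomials, and iterates with new small shifts $k_2,k_3,\dots$; since the degree strictly drops at every nontrivial step, the process terminates, and at the terminal stage the shifted and unshifted equations must have all roots in common, which is what yields $P_r(u+k,v-k)\equiv P_r(u,v)$; by construction the coefficients are rational in the values $\phi(u),\phi(u+k_1),\dots,\phi(v),\phi(v-k_1),\dots$ only. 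You instead pass to the infinitesimal version: the derivation $L=\partial_u-\partial_v$ annihilates $w=\phi(u+v)$, the ideal $(G,LG,L^2G,\dots)\subseteq\mathcal{K}[W]$ is $L$-stable and principal with monic generator $D$, and the monicity/degree argument forces $LD=0$, hence $LP_r=0$. This is clean, and in one respect sharper than the paper: it gives invariance for \emph{all} small $k$ at once, whereas the paper's termination argument is only run for the finitely many chosen $k_i$ and the passage to arbitrary $k$ is left tacit. The trade-offs are two: first, differentiation is not literally a ``rational algebraic operation'', so your construction does not verify the lemma exactly as worded; second, and more importantly for what follows, your coefficients $P_r$ are rational in $\phi(u),\phi(v)$ \emph{and their derivatives}, while the paper's are rational in values of $\phi$ at shifted arguments only --- a form the subsequent theorem exploits (the AAT makes $\phi(u+k_1)$ algebraic over $\phi(u)$ and $\phi(k_1)$, giving the relation $H[\phi(u),\psi_r(u)]=0$ and the algebroid character of $\psi_r$). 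To feed your $D$ into that argument you would need the additional (classical, but nowhere proved in this paper) fact that $\phi'$ is algebraic over $\phi$ for a function admitting an AAT. Your fallback suggestion --- the GCD of the whole shifted family --- is essentially the paper's method; the paper sidesteps the continuum-family issue you worry about by iterating pairwise GCDs over finitely many shifts and invoking the degree-drop termination instead.
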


\begin{proof} Suppose that the addition theorem is 
\begin{equation}
\label{at0}
G\{\phi(u),\phi(v),\phi(u+v)\}=0
\end{equation}If we expand it into powers of $\phi(u+v)$, the equation \eqref{at0} takes the form (say):
\begin{equation}
\label{at01}
\phi^{m_1}(u+v)+P_{11}\left[\phi(u),\phi(v)\right]\phi^{m_1-1}(u+v)+\cdots =0
\end{equation}where the $P$'s are rational functions of $\phi(u)$ and $\phi(v)$.

Now comes the fundamental insight.  In \eqref{at01} we write
$$u+k_1 \ \text{for} \ u, \ \ \ \ v-k_1 \  \text{for} \ v,$$where $k_1$ is a small number.  Then, by this substitution $u+v$ \emph{remains unchanged} and the equation \eqref{at01} becomes 
\begin{equation}
\label{at02}
\phi^{m_1}(u+v)+P_{11}\left[\phi(u+k_1),\phi(v-k_1)\right]\phi^{m_1-1}(u+v)+\cdots =0.
\end{equation}Both polynomial equations \eqref{at01} and \eqref{at02} have the \emph{common root} $\phi(u+v).$  Therefore the two polynomials have a \emph{greatest common divisor} (GCD) whose degree $m_2\leq m_1$.

If the GCD has degree $m_1$ we stop the process.

If the GCD has degree $m_2<m_1$, then it has the form
\begin{equation}
\label{at11}
\phi^{m_2}(u+v)+P_{21}\left[\phi(u),\phi(u+k_1),\phi(v),\phi(v-k_1)\right]\phi^{m_1-1}(u+v)+\cdots =0
\end{equation}where the $P$'s are rational functions of their arguments.  In \eqref{at11} we write
$$u+k_2 \ \text{for} \ u, \ \ \ \ v-k_2 \  \text{for} \ v,$$where $k_2$ is a small number.  Then, by this substitution $u+v$ \emph{remains unchanged} and the equation \eqref{at11} becomes
\begin{equation}
\label{at12}
\phi^{m_2}(u+v)+P_{21}\left[\begin{aligned}
\phi(u+k_2),\phi(u+k_1+k_2)\\
\phi(v-k_2),\phi(v-k_1-k_2)
\end{aligned}
\right]\phi^{m_2-1}(u+v)+\cdots =0
\end{equation}Both polynomial equations \eqref{at11} and \eqref{at12} have the \emph{common root} $\phi(u+v).$  Therefore the two polynomials have a \emph{greatest common divisor} (GCD) whose degree $m_3\leq m_2$.

If the GCD has degree $m_2$ we stop the process.

If the GCD has degree $m_3<m_2$, then we repeat the GCD process.  

After a finite number of steps we either have $m_k=1$, or the two equations through which further reduction is made possible have \emph{all} of their roots in common.  Thus, in either case, the degree of the GCD cannot be lowered any further by this process.  

The form of the typical coefficient in this GCD is 

\begin{equation}
\label{at12}
P_{r}\left[\begin{aligned}
\phi(u),\phi(u+k_1),\phi(u+k_2),\cdots\phi(u+k_r)\cdots\phi(u+k_1+\cdots+k_r)\\
\phi(v),\phi(v-k_1),\phi(v-k_2),\cdots\phi(v-k_r)\cdots\phi(v-k_1-\cdots-k_r)
\end{aligned}
\right]
\end{equation}It follows that all the coefficients of the final GCD remain unaltered when $u$ is increased by a certain quantity $k$ and $v$ is decreased by the same quantity.

\end{proof}

We have proven that 
$$P_r(u+k,v-k)\equiv P_r(u,v)$$ for arbitrary small $k$.

This means that $P_r(u,v)$ satisfies the partial differential equation 
$$\frac{\partial P_r}{\partial u}=\frac{\partial P_r}{\partial v}$$and this latter has the general solution
$$P_r(u,v)\equiv \psi_r(u+v)$$for some function $\psi_r$.  Since $P_r(u,v)$ is a rational combination of the values of $\phi$, it is algebroid in any bounded domain.  Now \textsc{Schwarz} proves the following theorem:

\begin{theorem}
\begin{quote} \

\begin{enumerate}

  \item The function $\psi_r(u)$ is \textbf{meromorphic} in all the finite plane.
  \item The function $\psi_r(u)$ admits an \textbf{algebraic addition theorem} .
  \item The function $\phi(u)$ is the root of an algebraic equation whose coeficients are rational functions of the meromorphic function $\psi_r(u).$
\end{enumerate}

\end{quote}
\end{theorem}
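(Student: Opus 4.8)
The plan is to establish the three claims in order, exploiting the structural facts that have already been assembled: $P_r(u,v)\equiv\psi_r(u+v)$, that $\psi_r$ is a rational combination of branch-values of $\phi$ and hence algebroid in every bounded domain, and that $\psi_r$ is manifestly single-valued (its values at $u+v$ are coefficients of a canonically-constructed polynomial, independent of path). For part (i), I would combine single-valuedness with the algebroid property: a single-valued algebroid function is meromorphic (this is exactly the first Corollary in the Algebroid Functions subsection), and since $\psi_r$ is algebroid in every bounded domain, it is meromorphic throughout the whole finite plane. One should also check $\psi_r$ is non-constant for at least one index $r$ — this follows because otherwise every coefficient of the GCD equation would be constant, which, together with $\phi(u+v)$ being a root, would force $\phi$ itself to be (locally) algebraic over the constants, i.e. constant, contradicting that $\phi$ is a genuine multiform/transcendental function admitting the AAT.

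For part (ii), I would show directly that $\psi_r$ inherits an algebraic addition theorem. By construction the GCD polynomial
\begin{equation*}
\phi^{m}(u+v)+\psi_{1}(u+v)\,\phi^{m-1}(u+v)+\cdots+\psi_{m}(u+v)=0
\end{equation*}
holds identically (for $(u,v)$ near a suitable base point, hence by permanence of the functional equation everywhere continuation is possible). Setting $w:=u+v$, this exhibits $\phi(w)$ as a root of a monic polynomial of degree $m$ over the field generated by the functions $\psi_1(w),\dots,\psi_m(w)$; in particular each $\psi_r(w)$ is, for $|w|<r$, an algebraic (indeed rational-symmetric-function) expression in $\phi_0(w)$, because all branches of $\phi$ are algebraic functions of the base branch $\phi_0$ in that disc. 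Thus $\psi_r$ is a rational function of $\phi_0$ near the origin, and since $\phi_0$ admits an AAT (the equation $G[\phi_0(u),\phi_0(v),\phi_0(u+v)]=0$), so does any rational function of it: one substitutes, clears denominators, and eliminates $\phi_0(u),\phi_0(v),\phi_0(u+v)$ against the three relations $\psi_r(u)=R_r(\phi_0(u))$, $\psi_r(v)=R_r(\phi_0(v))$, $\psi_r(u+v)=R_r(\phi_0(u+v))$ to obtain an algebraic relation $\bar G[\psi_r(u),\psi_r(v),\psi_r(u+v)]=0$ with constant coefficients.

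Part (iii) is then essentially already in hand: the GCD equation
\begin{equation*}
\phi^{m}(u+v)+\psi_{1}(u+v)\,\phi^{m-1}(u+v)+\cdots+\psi_{m}(u+v)=0
\end{equation*}
says precisely that $\phi$ is a root of an algebraic equation whose coefficients are (rational in, indeed equal to) the meromorphic function $\psi_r$ — taking $z:=u+v$ as a free variable, $\phi(z)$ satisfies $z^{m}+\psi_1(z)z^{m-1}+\cdots=0$ wait, rather $\phi(z)$ satisfies the monic degree-$m$ relation with coefficients $\psi_j(z)$. One should check these coefficients are genuinely \emph{rational} functions of a \emph{single} $\psi_r$: they are all symmetric functions of the $m$ branches of $\phi$, hence (by the theory of symmetric functions used in Koebe's proof) each is a rational function of any non-constant one among them, so picking the non-constant $\psi_r$ from part (i) and expressing the others rationally in it completes the statement.

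I expect the main obstacle to be part (ii) — specifically, pinning down rigorously that $\psi_r$ is a \emph{rational} (not merely algebroid/symmetric) function of the base element $\phi_0$ in a neighbourhood of the origin, and that this local identity propagates to give a bona fide AAT valid globally. The subtlety is that "symmetric function of all branches" gives single-valuedness and meromorphy directly, but to get a rational-in-$\phi_0$ expression one must invoke that $\phi_0$ generates (over the constants) the field containing all branches on $|u|<r$, which rests on the earlier result that every branch is an algebraic function of every other; then a symmetric function of the branches lies in the ground field $\mathbb{C}(\phi_0)$ itself rather than in a proper extension. Once that is secured, the elimination step producing $\bar G$ is routine resultant algebra, and appeal to the permanence of functional equations extends the identity from a neighbourhood to the whole plane.
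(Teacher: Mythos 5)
Your outline diverges from the paper's (Schwarz/Hancock) proof at the two points where the real work lies, and both are genuine gaps. First, in part (i) you treat single-valuedness of $\psi_r$ as ``manifest'' because the GCD polynomial is ``canonically constructed, independent of path.'' It is not manifest: $P_r(u,v)$ is a rational combination of values of the \emph{multiform} function $\phi$ at the points $u, u+k_1,\dots$ and $v, v-k_1,\dots$, and analytic continuation along different paths can carry these onto different branches, changing the coefficients. Proving path-independence is exactly the content of the paper's proof of (i): one chooses two mutually perpendicular diameters of the circle $K_R$ missing the (finitely many) singular points, thickens them to a cross of strips free of singularities, takes the $k_i$ with $|k_1|+\cdots+|k_r|$ less than half the width of the narrower strip, and lets $u$ run along one mid-line and $v$ along the other; then every $w=u+v$ in the circumscribed square is reached with all arguments of $P_r$ confined to the cross, where $\phi$ is single-valued, so $\psi_r$ is single-valued there, and meromorphy follows from ``single-valued $+$ algebroid.'' Without an argument of this kind your part (i) begs the question.

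Second, your route through parts (ii)--(iii) leans on two claims the paper never needs and which you cannot secure: that $\psi_r$ is a \emph{rational} function of the base element $\phi_0$ near the origin (you flag this yourself; ``every branch is algebraic over every other'' gives algebraicity, not membership in $\C(\phi_0)$, since the branches are not the full root set of a polynomial over $\C(\phi_0)$), and, in (iii), that each coefficient of the GCD equation is a rational function of any single non-constant one among them ``by symmetric function theory'' --- this is false in general (already $x_1+x_2$ and $x_1x_2$ are algebraically independent). The paper's argument is both simpler and sufficient: set $v=0$ in $P_r(u,v)=\psi_r(u+v)$; since $\phi$ has an AAT, each $\phi(u+k_i)$ is \emph{algebraic} over $\phi(u)$ (with the constants $\phi(\pm k_i)$), so elimination yields a single polynomial relation $H[\phi(u),\psi_r(u)]=0$. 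Then eliminating $\phi(u),\phi(v),\phi(u+v)$ from $G\{\phi(u),\phi(v),\phi(u+v)\}=0$ together with the three copies of $H$ gives $g\{\psi_r(u),\psi_r(v),\psi_r(u+v)\}=0$, i.e.\ part (ii); and $H$ itself, read as a polynomial in $\phi(u)$ with coefficients polynomial in $\psi_r(u)$, is precisely part (iii). If you replace your rationality claim and the symmetric-function shortcut by this elimination scheme, your elimination step at the end of (ii) becomes correct as written.
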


\begin{proof} [Proof of (i)]

 Since $\psi_r(u)$ is algebroid in any bounded domain, it is sufficient to prove that it is \emph{single-valued} there.  In particular, it suffices to prove that $\psi_r(u)$ is single-valued in any circle of arbitrary radius centered at the origin $u=0$.
 
 By assumption, $\phi(u)$ is regular in the neighborhood of the origin.  Let $R$ be an arbitrary number and let  $K_R$ be a circle of radius R centered at the origin.
 
 We will remove from $K_R$ two narrow strips that are perpendicular to each other and such that no branch point of $\phi(u)$ is within the strips.  \emph{Here is where we use the fact that the singular points of $\phi(u)$ form a finite discrete set in $K_R$.} 
 
 There are infinitely many pairs of mutually perpendincular lines through the origin which do not intersect any singular point of $\phi(u)$.  We use these two diamaters of $K_R$ as mid-lines of strips which also do not have any singular points of $\phi(u)$ in their interiors, which again is possible since the set of singular points in $K_R$ is finite.
 
 The two strips form a cross-shaped figure and no branch point of $\phi(u)$ is within this cross.  The functions $\phi(u)$ and $\phi(v)$ are regular (and, in particular, single-valued)  along the mid-lines of the strips that form the cross.  
 
 \emph{We now take the} $k$'s \emph{defined above so small} that $$|k_1|+|k_2|+\cdots+|k_r|<\text{half of the width of the more narrow of the two strips.}$$
 
 Then if $u$ moves along the middle line of one of the strips, while $v$ moves along the middle line of the other, all of the artuments which have been used in the formation of $P_r$ are situated within the cross.  Thus, if $u$ and $v$ are added geometrically, we conclude that $P_r(u,v):=\psi_r(u+v)$ is a \emph{single-valued function} for all values of $u+v$ within the square which circumscribes the circle $K_R$.  Since R is arbitrarily large, this completes the proof of $1.$
 \end{proof}\

\begin{proof}[Proof of (ii)]
 
 If we write $v=0$ in $\psi_r(u+v)$, we obtain the equation
 \begin{equation}
\label{coef1}
\psi_r(u)=P_r\left[\begin{aligned}
\phi(u),\phi(u+k_1),\phi(u+k_2),\cdots\phi(u+k_r)\cdots\phi(u+k_1+\cdots+k_r)\\
\phi(0),\phi(0-k_1),\phi(0-k_2),\cdots\phi(0-k_r)\cdots\phi(0-k_1-\cdots-k_r)
\end{aligned}
\right]
\end{equation}Since $P_r$ is a rational function of its arguments, and since $\phi(u)$ admits an algebraic addition theorem, we see that $\phi(u+k_1)$ is an algebraic function of $\phi(u)$ and $\phi(k_1)$, and similarly for the other arguments.

Therefore there exists an algebraic equation of the form
 \begin{equation}
\label{af}
H\left[\phi(u),\psi_r(u)\right]=0.
\end{equation}

Therefore there are four algebraic equations:
\begin{align*}
\label{}
  &G\{\phi(u),\phi(v),\phi(u+v)\}=0   \\
    & H\left[\phi(u),\psi_r(u)\right]=0\\
    &H\left[\phi(v),\psi_r(v)\right]=0 \\
    &H\left[\phi(u+v),\psi_r(u+v)\right]=0
    \end{align*}
    
 We eliminate    $\phi(u),\phi(v),\phi(u+v)$ from them and the resultant is \begin{equation}
\label{ }
g\{\psi_r(u).\psi_r(v),\psi_r(u+v)\}=0
\end{equation}which shows that $\psi_r(u)$ \emph{admits an algebraic addition theorem}.
\end{proof}\

\begin{proof}[Proof of (iii)]

The equation \eqref{af} shows that $\phi(u)$ is an algebraic function of $\psi_r(u)$ and this is the last part of the theorem.

\end{proof}



\section{Functions of Several Variables}

Weierstrass~\cite{Weierstrass1} defined the concept of an \textbf{\emph{Algebraic Addition Theorem}} for analytic functions of $n$ variables as follows: Let\begin{equation}
\label{ }
x_1:=\varphi_1(\mathbf{u}), \cdots,x_n:= \varphi_1(\mathbf{u})
\end{equation}where $\mathbf{u}:=(u_1,\cdots,u_n)$, $\mathbf{u}\in \C^n$ in complex $n$-space $\C^n$ in complex 
Osgood space $\Bar\C^n$ (the cartesian product of $n$ Riemann spheres) be $n$ \emph{analytically independent }analytic function elements.  Suppose that in the neighborhood of $\mathbf{a}:=(a_1,\cdots,a_n)$ all of these function elements are holomorphic and that for $k=1,\cdots,n$\begin{equation}
\label{xk}
x_k=b_k+b_{k1}(u_1-a_1)+\cdots+b_{kn}(u_n-a_n)+\cdots
\end{equation}where we assume that the determinant of the linear terms, $|b_{kl}|\neq 0$, $k,l=1,\cdots,n.$  Then, by the inverse function theorem, we can revert the series~\eqref{xk} to obtain the $n$ series\begin{equation}
\label{uk}
u_k=a_k+c_{k1}(x_1-b_1)+\cdots+c_{kn}(x_n-b_n)+\cdots
\end{equation}which converge if $|x_1-b_1|,\cdots,|x_n-b_n|$ are sufficiently small.

Now set \begin{align*}
\label{}
  x_k:=& \varphi_k(\mathbf{u})   \\
y_k:=& \varphi_k(\mathbf{v})   \\ 
 z_k:=&\varphi_k(\mathbf{u+v}). 
\end{align*}where $\mathbf{u}.\mathbf{v},\mathbf{u+v}$ all lie in a sufficiently small neighborhood of $\mathbf{a}$ such that also the series\begin{align}
\label{ukvk}
u_k=&a_k+c_{k1}(x_1-b_1)+\cdots+c_{kn}(x_n-b_n)+\cdots\\
v_k=&a_k+c_{k1}(y_1-b_1)+\cdots+c_{kn}(y_n-b_n)+\cdots
\end{align}converge.  

If we substitute these expansions for $u_k$ and $v_k$ into the functions $\varphi_k(\mathbf{u+v})$ we obtain $n$ equations of the form \begin{equation}
\label{aat}
z_k=F_k(x_1,\cdots,x_n;y_1,\cdots,y_n)\equiv F_k(\mathbf{x};\mathbf{y})
\end{equation}where the coefficients of the functions $F_k$ are independent of the arguments $\mathbf{u}$ and $\mathbf{v}$.
\\
\begin{definition}
If these $n$ equations~\eqref{aat} all reduce to \emph{algebraic equations}
\begin{equation}
\label{ }
G_k(z_k;\mathbf{x};\mathbf{y})\equiv G_k[ \varphi_k(\mathbf{u+v}); \varphi_1(\mathbf{u}),\cdots, \varphi_n(\mathbf{u});\varphi_1(\mathbf{v}),\cdots, \varphi_n(\mathbf{v})]=0
\end{equation}we say that the system of $n$ functions $\varphi_k(\mathbf{u})$ \emph{\textbf{admits an algebraic addition theorem.}}
\end{definition}

Weierstrass announced the following theorem:
\begin{theorem}\
\begin{quote}
A set of $n$ analytically independent analytic functions $\varphi_1(\mathbf{u}), \cdots,\varphi_1(\mathbf{u})$ admits an algebraic addition theorem if, and only if, each is an algebraic function of the same  $n$ analytically independent \textbf{meromorphic} functions which admit an algebraic addition theorem.
\end{quote}
\end{theorem}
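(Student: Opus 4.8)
The plan is to reduce the $n$-variable statement to the one-variable theory by a "slicing" argument, imitating the structure of Koebe's and Schwarz's proofs in the uniform-variable case. The easy direction is first: if each $\varphi_k(\mathbf u)$ is an algebraic function of $n$ analytically independent meromorphic functions $\psi_1(\mathbf u),\dots,\psi_n(\mathbf u)$ which themselves admit an AAT, then writing out the algebraic relations between $\varphi_k$ and the $\psi_j$ at the arguments $\mathbf u,\mathbf v,\mathbf u+\mathbf v$, together with the AAT of the $\psi_j$, and eliminating the $\psi_j$-values by resultants (exactly as in the proof of part (ii) of Schwarz's theorem) yields algebraic equations $G_k(z_k;\mathbf x;\mathbf y)=0$, so the system admits an AAT.

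For the hard direction I would proceed in three stages, paralleling the three steps announced for the one-variable problem. First, I would show that each $\varphi_k(\mathbf u)$ is algebroid in every bounded polydisc. The device is Weierstrass' continuation lemma applied along lines: restrict to the one-parameter family $\mathbf u=t\,\mathbf e$ for a fixed generic direction $\mathbf e$, so that $\varphi_k(t\mathbf e)$ is a function of the single variable $t$; putting $\mathbf v=\mathbf u$ in the AAT gives an algebraic relation between $\varphi_k(t\mathbf e)$ and $\varphi_k\bigl(\tfrac t2\mathbf e\bigr)$, and the one-variable lemma (Theorem on $|u|<R$) propagates analyticity/algebroidness along each such line; since this holds for a dense set of directions and the function is analytic, one concludes it is algebroid throughout each polydisc $|u_1|<R,\dots,|u_n|<R$. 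Second, I would adapt Koebe's fundamental observation: choosing $n$ pairwise "transverse" coordinate hyperplane configurations avoiding the (analytic, hence codimension-one) singular locus, every $\mathbf u$ in a large polydisc decomposes as $\mathbf u=\mathbf u'+\mathbf u''$ with the continuations of $\varphi_{k,0}$ along the two pieces path-independent; permanence of the functional equation then bounds the number of branches of each $\varphi_k$ by the degree of $G_k$ in $z_k$, uniformly in the polydisc radius, so the total branch count stabilizes and is globally finite, and any two branches of $\varphi_k$ are algebraically related. Third, I would form suitable elementary symmetric functions of the branches of each $\varphi_k$ to manufacture $n$ single-valued — hence meromorphic in all of $\C^n$ — functions $\psi_1,\dots,\psi_n$; one must check these can be chosen analytically independent (using the analytic independence of the $\varphi_k$) and that each $\psi_j$ inherits an AAT and that each $\varphi_k$ is algebraic over the field generated by the $\psi_j$, again by the resultant elimination used in Schwarz's argument.

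The main obstacle I expect is the step asserting that the symmetrized functions $\psi_1,\dots,\psi_n$ can be chosen to be \emph{analytically independent}: in one variable one only needs a single nonconstant symmetric function, but here one needs $n$ of them with nonvanishing Jacobian, and ruling out hidden functional dependence among all symmetric functions of all branches of all the $\varphi_k$ requires a genuine argument (essentially that the transcendence degree of the field generated by the branches over $\C$ equals $n$, which follows from the analytic independence of the original $\varphi_k$, but must be set up carefully). A secondary technical point is handling the singular set of an algebroid function of several variables — it is an analytic hypersurface rather than a discrete set — so the "generic transverse slices avoiding singularities" construction in the Koebe-type step needs the fact that a proper analytic subvariety cannot contain a generic affine subspace through the origin; this is standard but should be stated.
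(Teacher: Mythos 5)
You should know at the outset that the paper contains no proof of this statement to compare yours against: it is Weierstrass' announced theorem, and the paper explicitly records that ``to this day, no proof of the general theorem has been published,'' adding that the methods used in the one-variable case ``apparently do not generalize'' to abelian functions (only the related theorem on meromorphic mappings $\Phi\colon\C^n\to\Bar\C^n$ has been proved, by Painlev\'e for $n=2$ and by Severi and Abe in general, and Painlev\'e only claimed the multi-valued statement for $n=2$). Your proposal is therefore an attempt to do exactly what the paper says has not been done, by transplanting the Koebe--Schwarz machinery; it cannot be accepted as a proof, and the specific places where it breaks are identifiable.

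Concretely: (a) the slicing step misstates what the AAT gives you. Setting $\mathbf v=\mathbf u$ in $G_k(z_k;\mathbf x;\mathbf y)=0$ produces a relation between $\varphi_k(2\mathbf u)$ and \emph{all} of $\varphi_1(\mathbf u),\dots,\varphi_n(\mathbf u)$, not a binary relation $f\left[P\left(\frac t2\right),P(t)\right]=0$ for the single function $t\mapsto\varphi_k(t\mathbf e)$; the Weierstrass continuation lemma applies only to such a two-term relation in one unknown function, so you cannot invoke it along lines without first eliminating the other $\varphi_j$, which presupposes algebraic relations among the $\varphi_j$ that you do not have (and which analytic independence in fact militates against). (b) Koebe's path-independence argument uses the decomposition $u=u_1+u_2$ along two perpendicular diameters avoiding a \emph{finite} set of singular points, and bounds the branch number by the degree of $G$ in $\varphi_0(u+v)$; in $\C^n$ the singular locus is a positive-dimensional hypersurface, the branches of the $n$ functions must be continued simultaneously, and the degree of $G_k$ in $z_k$ bounds the values of $\varphi_k(\mathbf u+\mathbf v)$ only for \emph{fixed} branches of all $2n$ arguments, so no uniform bound on the total branch count follows as stated. (c) The step you yourself flag as the ``main obstacle'' --- extracting $n$ analytically independent single-valued meromorphic functions with an AAT from symmetric functions of the branches --- is not a technical check but the substantive content of the theorem; it is precisely where the theory of abelian and degenerate abelian functions (periods, the work of Painlev\'e, Severi, Abe) enters, and conceding it leaves the argument incomplete at its crux. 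In short, the easy direction of your proposal (elimination by resultants) is fine, but the hard direction remains, as in the paper, open.
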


This year (2017), \textsc{Juan de Vicente} (\cite{Juan}) presented the first published proof of this theorem.  He gives a direct proof of the rational form of the addition theorem based on the \textsc{Hurwitz-Weierstrass} characterization of rational functions of several complex variables and the \textsc{Schwarz} idea (see above) of making the multi-valued mapping which admits an AAT dependent on coefficients in the polynomial which express the AAT.

 Weierstrass~\cite{Weierstrass1} proposed the problem of determining 
all meromorphic mappings \linebreak
$\Phi\: \C^n \to \Bar\C^n$ of complex $n$-space $\C^n$ into complex 
Osgood space $\Bar\C^n$ (the cartesian product of $n$ Riemann spheres) 
that admit an \textit{algebraic addition theorem} (\AAT). He announced 
the solution to be \textit{the set of all proper or singular abelian 
mappings}, i.e., those mappings
$$
\mathbf{\Phi(\mathbf{u})} := \bigl( \varphi_1(\mathbf{u}),\dots,\varphi_n(\mathbf{u}) \bigr), 
  \qquad  \mathbf{u}\in \C^n,
$$
where $ \varphi_1(\mathbf{u}),\dots,\varphi_n(\mathbf{u})$ are proper or singular abelian 
functions, i.e., meromorphic functions with $k\leq 2n$ periods.  Fifteen years later, Painlev\'e~\cite{Painleve} proved him 
right for the case $n = 2$, and in 1948 and 1954 
Severi~\cite{Severi}, and in 2008 Abe~\cite{Abe} proved the general case.

Unfortunately, the methods we employed for the one variable case apparently do not generalize to the case of abelian functions.

Painleve~\cite{Painleve} also claimed to have proved the theorem that any multi-valued analytic mapping with an AAT is algebraically dependent on $n$ independent meromorphic functions with an AAT, for the case $n=2$.  However Abe~\cite{Abe} disputes its validity.


\subsection*{Acknowledgments}
I thank Joseph~C. V\'arilly for 
many discussions and unrelenting encouragement. Support from the 
Vicerrector\'{\i}a de Investigaci\'on of the University of Costa Rica 
is acknowledged.



\begin{thebibliography}{99}

\bibitem{Abe}
Yukitaka Abe, 
``A statement of Weierstrass on meromorphic functions which admit an algebraic addition theorem'', 
J. Math. Soc. Japan Volume 57, Number 3 (2005), 709-723

\bibitem{Aczel}
J. Aczel, 
\textit{Lectures on Functional Equations}, 
Academic Press, New York, 1966




\bibitem{Andrea}
S. A. Andrea, 
``Algebraic addition theorems'', 
Adv. Math. {\bf 13} (1974), 20--30.

\bibitem{Bell}
Eric Temple Bell, 
\textit{Men of Mathematics}, 
Simon and Schuster, New York, 1937.





\bibitem{BochnerM}
S. Bochner and W. T. Martin, 
\textit{Several Complex Variables}, 
Princeton Univ. Press, Princeton, NJ, 1948.

\bibitem{Cousin}
P. Cousin, 
``Sur les fonctions triplemente periodiques de deux variables'', 
Acta Math. {\bf 33} (1910), 105--232.

\bibitem{Falk}
M. Falk, 
\textit{Ueber die Haupteigenschaften derjenigen Anallytischen Functionen eines Arguments, welche Additionstheoreme Besitzen}, 
Nova Acta Regle Societatis Scientiarum Upsaliensis, Ser IV, Vol. 1, No. 8, 78pp.





\bibitem{Forsyth}
A.R. Forsyth, 
\textit{Theory of Functions of a Complex Variable}, 
Dover, New York, 1965.




\bibitem{Hancock}
H. Hancock, 
\textit{Lectures on the Theory of Elliptic Functions I}, 
Dover, New York, 1958.

\bibitem{Hurwitz}
A. Hurwitz, 
``Beweis des Satzes, das eine einwertige Funktion bleibig vielen 
Variabeln, welchen \"uberall als Quotient zweier potenzreihen 
dargestellt werden kann, eine rationale Funktion ihrer Argumente ist'',
J. reine angew. Math. {\bf 95} (1883), 201--206.


\bibitem{Juan}
E. Baro, Juan de Vicente, and M. Otero
``An Extension Resultl for Maps Admitting an Algebraic Addition Theorem'',
arXiv, 1704.0514v2.pdf





\bibitem{Koebe}
Paul Koebe, 
\textit{Ueber diejenigen analytischen Funktionen eins Arguments, welche ein algebraisches Additionstheorem besitzen}, in the book \textit{Mathematische Abhandlungen} \textsc{Hermann Amandus Schwrz} \textit{su seinem fuenfzigjachrigen Doktorjubilaeum}
Chelsea, New York, 1974.





\bibitem{Lang}
S. Lang, 
\textit{Algebra}, 3rd edition, 
Addison-Wesley, Reading, MA, 1993.

\bibitem{Mittag}
Gosta Mittag-Leffler, 
\textit{An Introduction to the Theory of Elliptic Functions}, 
Annals of Mathematics
Second Series, Vol. 24, No. 4 (Jun., 1923), pp. 271-351




\bibitem{Osgood}
W. F. Osgood, 
\textit{Lehrbuch der Funktionentheorie}, 
Chelsea, New York, 1965.


\bibitem{Osgood2}
W. F. Osgood, 
\textit{Review: A.R. Forsyth, Functions of a Complex Variable}, 
Bull. Amer. Math. Soc. $\mathbf{1}$ (1895), 142-154.



\bibitem{Painleve}
P. Painlev\'e, 
``Sur les fonctions qui admettent un th\'eor\`eme d'addition'', 
Acta Math. {\bf 27} (1903), 1--43.

\bibitem{Palais}
R. Palais, 
``Some analogues of Hartog's theorem in an algebraic setting'', 
Amer. J. Math. {\bf 100} (1978), 387--406.

\bibitem{Phragmen}
E. Phragm\'en, 
``Sur un th\'eor\`eme concernant les fonctions elliptiques'', 
Acta Math. {\bf 7} (1883), 33--42.

\bibitem{Picard}
E. Picard, 
``Memoire sur la th\'eorie des fonctions alg\'ebriques de deux 
variables'', 
J. Math. {\bf 5} (1889).

\bibitem{Saks}
S. Saks and A. Zygmund, 
\textit{Analytic Functions }, 
Elsevier Pub. Co., New York, 1971.






\bibitem{Severi}
F. Severi, 
\textit{Funzioni Quasi Abeliane}, 2a edizione ampliata,
Pont. Acad. Sci. Scripta Varia {\bf 20},
Rome, 1948.

\bibitem{Shafarevich}
I. R. Shafarevich, 
\textit{Basic Algebraic Geometry}, 2nd edition,
Springer, Berlin, 1996.

\bibitem{SiegelA}
C. L. Siegel, 
\textit{Analytic Functions of Several Complex Variables}, 
Lecture Notes, Institute for Advanced Study, Princeton, 1948.

\bibitem{SiegelM}
C. L. Siegel, 
``Meromorphe Funktionen auf kompakten analytischen 
Mannigfaltigkeiten'', 
Nachr. Akad. Wiss. G\"ottingen {\bf 4} (1955), 71--77.

\bibitem{Siegel3}
C. L. Siegel, 
\textit{Topics in Complex Function Theory III}, 
Wiley, New York, 1972.

\bibitem{ThimmThesis}
W. Thimm, 
``\"Uber algebraische Relationen zwischen meromorphe Funktionen in 
abgeschlossenen R\"aumen'', 
thesis, Universit\"at K\"onigsberg, 1939.

\bibitem{ThimmW}
W. Thimm, 
``Der Weierstra\ss'sche Satz der algebraischen Abh\"andigkeit von 
Abelschen Funktionen und seine Verallgemeinerungen'', 
in \textit{Festschrift zur Ge\-d\"achtnisteir f\"ur K.~Weierstrass},
Westdeutscher Verlag, K\"oln, 1966; pp.~123--154.

\bibitem{Weierstrass1}
K. Weierstrass,
Lectures of 1880--81 at the University of Berlin, 
as cited by Painlev\'e~\cite{Painleve}.

\bibitem{Weierstrass2}
K. Weierstrass, 
``Allgemeine Untersuchungen \"uber die $2n$-fach periodische 
Funktionen von $n$ Ver\"andlichen'', in {\it Mathematische Werke III}, 
Mayer \& M\"uller, Berlin, 1903; pp.~54--114.

\end{thebibliography}
\end{document}